\documentclass[10pt,oneside,leqno]{amsart}
\usepackage{amsmath,amsfonts,amssymb,amsthm,mathscinet}
\usepackage[mathcal]{euscript}
\usepackage{enumitem}
\setlist[enumerate]{label={\rm(\roman*)}}
\usepackage{color,comment,mathtools}
\usepackage[a4paper, left=2cm, right=2cm, top=3cm, bottom=3cm]{geometry}
\usepackage[numbers,sort&compress]{natbib}
\usepackage{hyperref,url,doi,xcolor}
\hypersetup{
	colorlinks,
	linkcolor=teal,
	urlcolor=magenta,
	citecolor=violet}
\urlstyle{rm}
\usepackage[a-2u]{pdfx}
\begin{filecontents}[overwrite]{\jobname.xmpdata}
\Author{V\'\i t Musil, S. Spektor}
\Title{Increasingly global convergence of Hermite series}
\end{filecontents}
\hyphenation{re-ar-ran-ge-ment-in-va-ri-ant}

\renewcommand{\d}{{\mathrm d}}
\newcommand{\R}{\mathbb{R}}
\newcommand{\N}{\mathbb{N}}

\newcommand{\DD}{\mathcal{D}}

\newcommand{\MM}{\mathcal{M}}

\newcommand{\RR}{\mathcal{R}}

\let\tilde\widetilde
\newcommand{\dom}{(\R_+)}

\newtheorem{theorem}{Theorem}[section]
\newtheorem*{theorem*}{Theorem}
\newtheorem{lemma}[theorem]{Lemma}
\newtheorem{corollary}[theorem]{Corollary}

\newtheorem{remark}[theorem]{Remark}
\newtheorem{example}[theorem]{Example}
\expandafter\let\expandafter\oldproof\csname\string\proof\endcsname
\let\oldendproof\endproof
\renewenvironment{proof}[1][\proofname]{%
  \oldproof[\bf #1]%
}{\oldendproof}
\numberwithin{equation}{section}

\usepackage{xspace}
\makeatletter
\DeclareRobustCommand\onedot{\futurelet\@let@token\@onedot}
\def\@onedot{\ifx\@let@token.\else.\null\fi\xspace}
\def\eg{e.g\onedot} 
\def\ie{i.e\onedot} 
\def\ae{a.e\onedot} 
\def\ri{r.i\onedot} 
\makeatother

\makeatletter
\def\paragraph{\bigskip\@startsection{paragraph}{4}%
  \z@\z@{-\fontdimen2\font}%
  {\normalfont\bfseries}}
\makeatother

\newcommand{\fT}{f\chi_T}
\newcommand{\chn}{\chi_{n}}


\begin{document}

\title{Increasingly global convergence of Hermite series}

\begin{abstract}
We study the convergence of Hermite series of measurable functions on the real line.
We characterize the norm convergence of truncated partial Hermite sums in rearrangement invariant spaces
provided that the truncations increase sufficiently slowly.
Moreover, we provide necessary and sufficient conditions for convergence in the Orlicz modular.
\end{abstract}

\author[V.~Musil]{V\'\i t Musil\textsuperscript{1,*}}
\address{\textsuperscript{1}
Department of Computer Science,
Masaryk University,
Brno, Czech Republic
}
\email{musil@fi.muni.cz} 
\urladdr{%
	0000-0001-6083-227X 
}
\address{\textsuperscript{*}
Corresponding author
}

\author[S.~Spektor]{S. Spektor\textsuperscript{2}}
\address{\textsuperscript{2}
Quantitative Science Department,
Canisius University,
2001 Main Street, Buffalo, NY, USA
}
\email{spektors@canisius.edu}


\maketitle

\bibliographystyle{abbrv}

\section*{How to cite this paper}
\noindent
This paper has been accepted for publication in \emph{Journal of Mathematical Analysis and Applications} and is available at
\begin{center}
	\url{https://doi.org/10.1016/j.jmaa.2025.130360}.
\end{center}
Should you wish to cite this paper, the authors would like to cordially ask you
to cite it appropriately.

\section{Introduction and main result}
\label{S:intro}

\paragraph{Hermite series}

The $k$-th Hermite function, $h_k$, is given at $x\in\R$ by
\begin{equation*}
	h_k(x) 
		= (-1)^k \gamma_k e^{\frac{x^2}{2}} \frac{\d^k e^{-x^2}}{\d x^k}
		= H_k(x) e^{\frac{-x^2}{2}}
		\quad\text{for $k=0,1,\dots$},
\end{equation*}
in which $\gamma_k = \pi^{-1/4}2^{-k/2}(k!)^{-1/2}$ and $H_k$ is the
Hermite polynomial of degree $k$. Given a suitable function $f$ on $\R$,
its Hermite series is
\begin{equation*}
	\sum_{k=0}^{\infty} c_k h_k,
	\quad\text{where}\quad
	c_k = \int_{\R} f h_k
		\quad\text{for $k=0,1,\dots$}
\end{equation*}
We denote the $n$-th partial sum of the Hermite series of~$f$ by $S_nf= \sum_{k=0}^{n} c_k h_k$.

Hermite series, often under the name Gram-Charlier series of type A or Gauss-Hermite series, were initially applied to approximate probability density functions \citep{Sil:86} and later to problems in Astrophysics \citep{Bli:98}.
The series raise many questions of physical interest, for example, in the study of the harmonic oscillator in Quantum Mechanics and of equatorial waves in Dynamic Meteorology and Oceanography~\citep{Boy:84}.

\paragraph{Norm convergence of Hermite series}

In 1965, Askey and Wainger \citep{Ask:65} showed that for $4/3<p<4$ one has
\begin{equation} \label{E:p-convergence}
	\lim_{n\to\infty} \|S_nf-f\|_p = 0
		\quad\text{whenever}\quad
	\|f\|_p<\infty,
\end{equation}
whereas the same does not hold for $1\le p \le 4/3$ or $p\ge 4$.

A special case of our general result asserts that for all $1<p<\infty$
one has
\begin{equation} \label{E:p-convergence-local}
	\lim_{n\to\infty} \|\chi_{n} S_n(f\chi_{n})-f\|_p = 0
		\quad\text{if}\quad
	\|f\|_p<\infty,
\end{equation}
where the $\chi_n$ are characteristic functions of sufficiently slowly increasing subintervals of $\R$, namely
\begin{equation} \label{E:chn-def}
	\chi_n = \chi_{(-T_n,T_n)}
		\quad\text{and}\quad
	T_n = o(n^{1/34})\text{ as $n\to\infty$}.
\end{equation}

We work with an expression for the truncated partial sum
\begin{equation} \label{E:truncated-Sn}
	\chn S_n(f\chn),
		\quad n=0,1,\dots,
\end{equation}
due to G.~Sansone, which is a refinement of the one employed
by J.V.~Uspensky \citep{Usp:26} to prove his classical pointwise
convergence theorems for the Hermite series.
The heart of the truncated partial sum operator~\eqref{E:truncated-Sn}
is the truncated Dirichlet operator
\begin{equation*}
	F_N (f\chn)(x)
		= \int_{-T_n}^{T_n} \frac{\sin\bigl(N(x-y)\bigr)}{x-y} f(y)\,\d y
		\quad\text{for $x \in\R$}
\end{equation*}
with $N\in\R$ being tied to $n$.
It is through this fact that we are
able to connect the truncated partial sum \eqref{E:truncated-Sn} to 
another well-known operator,
namely to the Stieltjes transform $S$
defined for the measurable function $g$ on $(0,\infty)$ by
\begin{equation*}
	S g(t) = \int_0^\infty \frac{g(s)}{t+s}\,\d s
		\quad\text{for $t> 0$}.
\end{equation*}

\paragraph{General result}

The general result involves so-called rearrangement-invariant (\ri) norms,
a general framework at the intersection of measure theory and functional analysis
which abstracts Lebesgue, Lorentz, Orlicz and many more customary function spaces;
see \eg~\citep{Ben:88}.
The definitions are summarized at the beginning of Section~\ref{S:ineq}.

\begin{theorem}\label{T:ri}
Let $\rho_X$ and $\rho_Y$ be \ri norms
such that $\varphi_X(0+)=0$ and $X_b(\R) \subseteq Y(\R)$.
Suppose that $\chn$ is taken as in \eqref{E:chn-def}.
Then the following assertions are equivalent:
\begin{enumerate}
\item\label{en:convergence-ri}
The norm $\rho_Y$ obeys \eqref{E:logY} and
\begin{equation*}
	\lim_{n\to\infty} \bigl\| \chn S_n(f\chn) - f \bigr\|_{Y(\R)} = 0
		\quad\text{for every $f\in X_b(\R)$};
\end{equation*}
\item\label{en:S-ri}
One has
\begin{equation*}
	\bigl\| S g \bigr\|_{Y\dom} \lesssim \|g\|_{X\dom}
		\quad\text{for every $g\in X\dom$},
\end{equation*}
in which $S$ is the Stieltjes transform.
\end{enumerate}
\end{theorem}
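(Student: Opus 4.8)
The plan is to prove the two implications separately, using the passage from the truncated Hermite operator $\chn S_n(\cdot\,\chn)$ to the Stieltjes transform $S$ as the common bridge. The backbone of both directions is a reduction formula: starting from Sansone's expression for $\chn S_n(f\chn)$ and the Dirichlet operator $F_N$, and inserting the Plancherel--Rotach asymptotics for the Hermite functions $h_k$, one replaces the oscillatory Dirichlet kernel $\sin(N(x-y))/(x-y)$ by the Stieltjes kernel $1/(t+s)$ acting on a function $g$ obtained from $f$ by an appropriate change of variables. The substitution is valid in the oscillatory region, into which the truncation $T_n = o(n^{1/34})$ of \eqref{E:chn-def} confines the variables, and it is exact up to remainder terms whose $\rho_Y$-size is governed by that same growth rate.

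For the implication from the Stieltjes boundedness to the convergence statement, I would first use the reduction to transfer the assumed bound $\|Sg\|_{Y(0,\infty)} \lesssim \|g\|_{X(0,\infty)}$ into a uniform operator bound $\sup_n \|\chn S_n(\cdot\,\chn)\|_{X\to Y} < \infty$. Testing the Stieltjes bound on a normalized characteristic function, whose Stieltjes transform is essentially a logarithm, simultaneously yields the condition \eqref{E:logY}. Convergence on $X_b(\R)$ then follows by the standard uniform-bound-plus-density argument, the hypothesis $\varphi_X(0+)=0$ guaranteeing that a suitable subclass $\DD$ (smooth, compactly supported functions, say) is dense in $X_b(\R)$: for $f\in X_b(\R)$ and an approximant $g\in\DD$ I would split
\begin{equation*}
  \|\chn S_n(f\chn) - f\|_Y
  \le \|\chn S_n((f-g)\chn)\|_Y + \|\chn S_n(g\chn) - g\|_Y + \|g-f\|_Y,
\end{equation*}
bound the first term by the uniform operator norm times $\|f-g\|_X$, send the middle term to zero by direct computation for the nice function $g$ (for which $g\chn=g$ once $T_n$ exceeds the support), and control the last term through the embedding $X_b(\R)\subseteq Y(\R)$.

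For the reverse implication, I would invoke the uniform boundedness principle: norm convergence of $\chn S_n(\cdot\,\chn)$ at every point of the Banach space $X_b(\R)$ forces $\sup_n\|\chn S_n(\cdot\,\chn)\|_{X_b\to Y}<\infty$. Running the reduction formula backwards then converts this uniform bound into a bound for truncated Stieltjes transforms, and a passage to the limit $n\to\infty$, collecting the truncations, recovers the full inequality $\|Sg\|_{Y(0,\infty)}\lesssim\|g\|_{X(0,\infty)}$.

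I expect the main obstacle to be the uniform control of the Plancherel--Rotach remainder within a general \ri norm rather than a fixed $L^p$ space. The oscillatory error kernels must be shown to act negligibly in $\rho_Y$ uniformly in both the spatial variable and the degree $n$, and it is precisely the balancing of these error estimates against the truncation width that forces the sharp rate $T_n = o(n^{1/34})$. Carrying out these estimates, and checking that they survive the abstract \ri setting, where the usual $L^p$ tools of duality and interpolation are not directly available, is where the bulk of the technical effort lies.
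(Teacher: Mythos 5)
Your high-level skeleton agrees with the paper's: reduce the convergence statement to uniform boundedness of the operators $f\mapsto\chn S_n(f\chn)$ via the uniform boundedness principle in one direction and a density-plus-three-term-split argument in the other (Lemma~\ref{L:UBP} and Lemma~\ref{L:Dclosure} in the paper), and obtain \eqref{E:logY} by testing the Stieltjes bound on $\chi_{(0,1)}$, whose transform is $\log(1+\tfrac1t)$. The genuine gap is in the ``bridge'' itself, which you present as a reversible substitution of the Dirichlet kernel $\sin(N(x-y))/(x-y)$ by the Stieltjes kernel, ``exact up to remainder terms.'' No such two-way replacement exists, and the two directions require different mechanisms. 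For the upper bound (Stieltjes boundedness $\Rightarrow$ uniform bound), the paper does not change variables: it writes each Sansone term as a bounded multiplier times a Hilbert transform $H(m_j\fT)$ and then invokes the rearrangement inequality $(Hf)^*(t)\le C\,Sf^*(t)$ of \eqref{E:HleS} (Lemmas~\ref{L:HleS} and~\ref{L:SnleS}); the function fed into $S$ is the decreasing rearrangement $f^*$, not a change of variables of $f$, and it is exactly this O'Neil-type inequality that makes the argument survive in a general \ri norm.

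The more serious failure is your reverse direction: ``running the reduction formula backwards'' is precisely the step that cannot be done. The rearrangement inequality \eqref{E:HleS} has no converse, and because the Dirichlet kernel oscillates, cancellation can make $|\chn S_n(f\chn)|$ vastly smaller than $Sf^*$; a uniform upper bound on the Hermite sums therefore yields nothing about $S$ by mere reversal. The paper's key idea (Lemma~\ref{L:SleSn}) is a pointwise \emph{lower} bound built from specially designed test functions: given $g\ge 0$ on $(0,\infty)$, split it over the phase windows $I_k$ (unions of intervals of length $\tfrac{\pi}{4N}$ modulo $\tfrac{\pi}{N}$), translate by quarter periods, and reflect to the negative half-line as in \eqref{E:fkm-def}, so that for $x$ in a matching window $|\sin(N(x+y))|\ge\sin(\pi/4)$ on the support of each piece; the Dirichlet kernel then dominates a constant multiple of the Stieltjes kernel, the error $S_n-c_nF_N$ is absorbed via Lemma~\ref{L:S-FleH} and the choice \eqref{E:chn-def}, and one concludes
\begin{equation*}
	Sg(x)\lesssim\liminf_{n\to\infty}\sum_{k=1}^4\sum_{m=k-3}^{k}\chn\bigl|S_n(f_{k,m}\chn)(x)\bigr|,
\end{equation*}
which combines with lower semicontinuity of $\rho_Y$ and the uniform bound to give the Stieltjes inequality. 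Without this construction your argument does not close. A smaller omission: the uniform bound from the UBP holds on $X_b(\R)$, so this route first gives $\|Sg\|_{Y(0,\infty)}\lesssim\|g\|_{X(0,\infty)}$ only for $g\in X_b(0,\infty)$; the paper upgrades this to all of $X(0,\infty)$ by a duality argument using the associate norm and the self-adjointness of $S$ (one could alternatively use the Fatou property with simple functions $g_j\uparrow g$), and some such step must appear in your proof as well.
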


Recall that $\varphi_X$ denotes the fundamental function, \ie $\varphi_X(t)$ is the norm of $\chi_E$
for any set of measure $t$ and $X_b(\R)$ is the closure of the simple functions in $X(\R)$.
The assumption can be equivalently formulated as the closure in $X$ of smooth and compactly supported functions contained in $Y$; see Lemma~\ref{L:Dclosure}.

The convergence of the truncated partial sum~\eqref{E:truncated-Sn} therefore reduces to the study of the
boundedness of Stieltjes transform $S$.
In the context of \ri spaces, an exhaustive treatment of optimal spaces for $S$ is available in~\citep[Chapter~5]{Edm:20}.
Examples of optimal pairs of Lorentz-Zygmund spaces are contained in \citep[Theorem~5.3]{Edm:20}.

\paragraph{Orlicz modular convergence}

One can apply Theorem~\ref{T:ri} to Orlicz norms by employing a well-known analysis of the Stieltjes transform between Orlicz spaces; see \eg~\citep{Cia:99a}.
Our next result characterizes the convergence when the Orlicz norm in \ref{en:convergence-ri} of Theorem~\ref{T:ri} is replaced by the corresponding Orlicz modular.

Recall that a function $A\colon[0,\infty)\to[0,\infty]$ is called a Young function if
$A$ is left-continuous, convex, satisfying $A(0)=0$, and not constant in $(0,\infty)$.
Next, $\tilde A(t) = \sup\{\tau t - A(\tau): \tau\ge 0\}$ denotes the Young function conjugate to $A$.
For a Young function $A$, denote
\begin{equation} \label{E:EA-def}
	E^A(\R) = 
		\left\{ 
			f\colon\R\to\R\,\text{measurable}:
				\textstyle\int_{\R} A\bigl(c|f|\bigr) < \infty
				\text{ for all $c>0$}
		\right\}.
\end{equation}
If $A$ is finite-valued then $E^A(\R)$ coincides with the the closure of smooth,
compactly supported functions on $\R$ in the norm of the corresponding Orlicz
space; see Section~\ref{S:Orlicz} for details.

\begin{theorem} \label{T:main-Orlicz}
Let $A$ and $B$ be Young functions with
$B(t)\le A(ct)$ for some $c>0$ independent of $t>0$.
Assume that $A$ is finite-valued and $B$ obeys
\begin{equation} \label{E:logB}
	\int_{0}^\infty B\Bigl(\kappa\log\left(1+\tfrac{1}{t}\right)\Bigr)\,\d t < \infty
		\quad\text{for some $\kappa>0$}.
\end{equation}
Then, given $\chn$ as in \eqref{E:chn-def},
the following are equivalent:
\begin{enumerate}
\item\label{en:truncated-convergence}
\begin{equation*}
	\lim_{n\to\infty} \int_{\R} B\bigl(\lambda|\chn S_n(f\chn)-f|\bigr) = 0
		\quad\text{for every $f\in E^A(\R)$ and $\lambda>0$};
\end{equation*}

\item\label{en:AB-conditions}
There exists $K>0$ such that
\begin{equation*}
	\int_{0}^t \frac{B(s)}{s^2}\,\d s \le \frac{A(Kt)}{t}
		\qquad\text{and}\qquad
	\int_{0}^t \frac{\tilde A(s)}{s^2}\,\d s \le \frac{\tilde B(Kt)}{t}
		\quad\text{for $t>0$}.
\end{equation*}
\end{enumerate}
\end{theorem}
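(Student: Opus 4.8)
The plan is to obtain Theorem~\ref{T:main-Orlicz} as the Orlicz specialisation of Theorem~\ref{T:ri}, taking $\rho_X$ and $\rho_Y$ to be the Luxemburg norms of $L^A(\R)$ and $L^B(\R)$, and to bridge the modular statement \ref{en:truncated-convergence} and the norm statement in Theorem~\ref{T:ri}\ref{en:convergence-ri} by a dilation argument. First I would check the three standing hypotheses of Theorem~\ref{T:ri} for this pair. Since $A$ is finite-valued, the fundamental function $\varphi_X(t)=1/A^{-1}(1/t)$ tends to $0$ as $t\to0+$, so $\varphi_X(0+)=0$. The pointwise bound $B(t)\le A(ct)$ gives $\|f\|_{L^B(\R)}\le c\,\|f\|_{L^A(\R)}$, hence the embedding $X_b(\R)=E^A(\R)\subseteq L^B(\R)=Y(\R)$. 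Finally, \eqref{E:logB} states exactly that $t\mapsto\log(1+\tfrac1t)$ has finite modular, and hence finite Luxemburg norm, in $L^B(0,\infty)$; this is the Orlicz reading of \eqref{E:logY}, so $\rho_Y$ obeys \eqref{E:logY}. (Note that \eqref{E:logB} forces $B$ to be finite-valued, since its integrand must be finite for the arguments $\log(1+\tfrac1t)\to\infty$ as $t\to0+$.)

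The crux of the reduction is that, because $E^A(\R)$ is invariant under dilations and each operator $f\mapsto\chn S_n(f\chn)$ is linear, modular convergence for every $f\in E^A(\R)$ is equivalent to norm convergence in $L^B(\R)$ for every such $f$. Writing $g_n=\chn S_n(f\chn)-f$, one direction is immediate: once $\|g_n\|_{L^B(\R)}\le1$, convexity of $B$ together with $B(0)=0$ yields $\int_\R B(|g_n|)\le\|g_n\|_{L^B(\R)}$, so norm convergence gives modular convergence. For the converse, apply the assumed modular convergence to the admissible functions $\lambda f\in E^A(\R)$: this gives $\int_\R B(\lambda|g_n|)\to0$ for every $\lambda>0$, and choosing $\lambda=1/\varepsilon$ forces $\|g_n\|_{L^B(\R)}\le\varepsilon$ for large $n$. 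Thus \ref{en:truncated-convergence} is equivalent to the norm-convergence clause of Theorem~\ref{T:ri}\ref{en:convergence-ri}; since \eqref{E:logY} has been verified, \ref{en:truncated-convergence} is equivalent to Theorem~\ref{T:ri}\ref{en:convergence-ri} in full, hence to \ref{en:S-ri}, namely the boundedness of the Stieltjes transform $S\colon L^A(0,\infty)\to L^B(0,\infty)$.

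It remains to identify this boundedness with the inequalities in \ref{en:AB-conditions}. Splitting the kernel and using $\tfrac1{t+s}\approx\tfrac1t$ for $s<t$ and $\tfrac1{t+s}\approx\tfrac1s$ for $s>t$, the Stieltjes transform is pointwise equivalent to $Pg(t)+Qg(t)$, where $Pg(t)=\tfrac1t\int_0^t g$ and $Qg(t)=\int_t^\infty g(s)/s\,\d s$; hence $S$ is bounded from $L^A$ to $L^B$ iff both $P$ and $Q$ are. The Orlicz characterisation of the Hardy operator (\eg~\citep{Cia:99a}) yields that $P\colon L^A\to L^B$ is bounded iff $\int_0^t B(s)/s^2\,\d s\le A(Kt)/t$; and since $P$ and $Q$ are mutually adjoint, boundedness of $Q\colon L^A\to L^B$ is equivalent to boundedness of $P\colon L^{\tilde B}\to L^{\tilde A}$, that is, to $\int_0^t\tilde A(s)/s^2\,\d s\le\tilde B(Kt)/t$. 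Together these are precisely \ref{en:AB-conditions}, completing the chain \ref{en:truncated-convergence}$\iff$\ref{en:S-ri}$\iff$\ref{en:AB-conditions}.

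I expect the genuine work to sit in the last step rather than in the reduction. The dilation bridge and the verification of the hypotheses of Theorem~\ref{T:ri} are short, but the passage between boundedness of $S$ and the two integral inequalities requires care: one must make the pointwise equivalence $Sg\approx Pg+Qg$ quantitative enough to transfer Orlicz boundedness in both directions, and pin down the Cianchi-type equivalences with their behaviour at both $0$ and $\infty$ and a single uniform constant $K$, the hypothesis $B(t)\le A(ct)$ being what keeps the paired conditions mutually compatible. For these I would lean on the established theory of Hardy-type operators in Orlicz spaces rather than reproving it.
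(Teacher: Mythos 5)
Your proposal is correct and takes essentially the same route as the paper: verify the hypotheses of Theorem~\ref{T:ri} for the Luxemburg norms $\rho_A,\rho_B$, pass between modular and norm convergence using linearity of $f\mapsto\chn S_n(f\chn)$ and the rescaling $f\mapsto\lambda f$ inside $E^A(\R)$ (the paper cites Rao--Ren and Morse--Transue for this norm/modular equivalence where you argue it directly), and then reduce boundedness of $S$ to the known Orlicz characterizations of the Hardy operators via $Sg\approx Pg+Qg$. The only cosmetic difference is that you spell out the $P$--$Q$ duality, whereas the paper cites the literature for both integral conditions at once.
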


\begin{remark}
If \ref{en:AB-conditions} holds, then the theorem's assumptions are satisfied.
Indeed, by monotonicity of~$B$,
\begin{equation*}
	\int_{0}^t \frac{B(s)}{s^2}\,\d s
		\ge \int_{t/2}^t \frac{B(s)}{s^2}\,\d s
		\ge B\left( \tfrac{t}{2} \right) \int_{t/2}^t \frac{\d s}{s^2}
		= \tfrac{1}{t} B\left( \tfrac{t}{2} \right)
  \quad\text{for $t>0$},
\end{equation*}
and the first inequality in \ref{en:AB-conditions}, in fact, implies
that $B(t)\le A(2Kt)$ for $t>0$.
Furthermore, \ref{en:AB-conditions} also yields \eqref{E:logB}
as shown in the proof of Theorem~\ref{T:main-Orlicz}.
\end{remark}

Let us turn our attention to some special cases and examples.
A Young function $A$ is said to satisfy the $\Delta_2$
condition if there is $C>0$ such that $A(2t)\le CA(t)$
for $t>0$. We write $A\in\Delta_2$. It is well known that
\begin{equation*}
	A\in\Delta_2
	\quad\text{if and only if}\quad
	\int_{0}^{t} \frac{\tilde A(s)}{s^2}
		\le \frac{\tilde A(Kt)}{t}
		\quad\text{for $t>0$}
\end{equation*}
for some $K>0$.
Next, $A$ obeys the $\nabla_2$ condition if there is $c>0$ such that $A(t)\le\frac{1}{2c}A(ct)$ for $t>0$.
We denote this by $A\in\nabla_2$.
It is immediate that $A\in\nabla_2$ if and only if $\tilde A\in\Delta_2$.
Another important role of the $\Delta_2$ condition arises in connection with the $E^A(\R)$ space.
Namely,
\begin{equation*}
	E^A(\R) = L^A(\R)
	\quad\text{if and only if}\quad
	A\in\Delta_2,
\end{equation*}
where $L^A(\R)$ is an Orlicz space defined by
\begin{equation*}
	L^A(\R) =
	\left\{
		f\colon\R\to\R\,\text{measurable}:
			\textstyle\int_{\R} A\bigl(c|f|\bigr) <\infty \text{ for some $c>0$}
	\right\}.
\end{equation*}
This yields an immediate consequence.

\begin{corollary} \label{C:Delta-2}
Let $A$ be a Young function obeying $A\in\Delta_2$ and $A\in\nabla_2$.
Then,
\begin{equation*}
	\lim_{n\to\infty} \int_{\R} A\bigl(\lambda|\chn S_n(f\chn)-f|\bigr) = 0
		\quad\text{for every $f\in L^A(\R)$ and $\lambda>0$}.
\end{equation*}
\end{corollary}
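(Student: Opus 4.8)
The plan is to apply Theorem~\ref{T:main-Orlicz} with the choice $B=A$ and then to identify the two integral inequalities in~\ref{en:AB-conditions} with the standing hypotheses $A\in\Delta_2$ and $A\in\nabla_2$. First I would check that the pair $(A,A)$ is admissible for the theorem. The comparison $B(t)=A(t)\le A(ct)$ holds trivially with $c=1$, and a $\Delta_2$ Young function is automatically finite-valued: if $A\equiv\infty$ on $[t_0,\infty)$, then for $t\in[t_0/2,t_0)$ one would have $A(t)<\infty$ but $A(2t)=\infty$, contradicting $A(2t)\le CA(t)$. It therefore remains only to verify condition~\ref{en:AB-conditions}, and by the remark following Theorem~\ref{T:main-Orlicz} this in turn supplies the remaining hypothesis~\eqref{E:logB} at no extra cost.

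The core of the argument is the verification of the two inequalities in~\ref{en:AB-conditions} with $B=A$. The second one,
\[
	\int_0^t \frac{\tilde A(s)}{s^2}\,\d s \le \frac{\tilde A(Kt)}{t},
\]
is, by the characterization of $\Delta_2$ recalled just before the corollary, precisely the statement $A\in\Delta_2$, which holds by assumption. For the first inequality,
\[
	\int_0^t \frac{A(s)}{s^2}\,\d s \le \frac{A(Kt)}{t},
\]
I would apply the same characterization but with $\tilde A$ in place of $A$: invoking biconjugacy $\tilde{\tilde A}=A$, the displayed inequality is equivalent to $\tilde A\in\Delta_2$, which is in turn equivalent to $A\in\nabla_2$, again true by hypothesis.

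With both inequalities of~\ref{en:AB-conditions} established, Theorem~\ref{T:main-Orlicz} yields assertion~\ref{en:truncated-convergence} for the pair $(A,A)$, that is, $\lim_{n\to\infty}\int_{\R} A(|\chn S_n(f\chn)-f|)=0$ for every $f\in E^A(\R)$. Since $A\in\Delta_2$ gives $E^A(\R)=L^A(\R)$, the convergence then holds for all $f\in L^A(\R)$, as claimed. There is essentially no obstacle here beyond bookkeeping; the only point requiring genuine care is the correct use of duality, namely $\tilde{\tilde A}=A$ together with the equivalence $A\in\nabla_2\Leftrightarrow\tilde A\in\Delta_2$, to convert the $\nabla_2$ assumption into the first integral inequality.
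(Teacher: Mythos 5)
Your proof is correct and follows exactly the route the paper intends for this ``immediate consequence'': take $B=A$ in Theorem~\ref{T:main-Orlicz}, identify the second inequality in \ref{en:AB-conditions} with $A\in\Delta_2$ and the first (via $\tilde{\tilde A}=A$ and $A\in\nabla_2\Leftrightarrow\tilde A\in\Delta_2$) with $A\in\nabla_2$, then pass from $E^A(\R)$ to $L^A(\R)$ using $A\in\Delta_2$. Your explicit checks that $\Delta_2$ forces $A$ to be finite-valued and that the Remark supplies \eqref{E:logB} are exactly the bookkeeping the paper leaves implicit.
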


\begin{example}
A typical example of a Young function satisfying the hypothesis of
Corollary~\ref{C:Delta-2} is the function $A(t)=t^p$ for $t>0$ with
$p\in(1,\infty)$. Here, $L^A(\R)=L^p(\R)$, a customary Lebesgue space, and
Corollary~\ref{C:Delta-2} reads as the special case stated in~\eqref{E:p-convergence-local}.

Perturbations $L^A(\R)$ of the $L^p(\R)$ spaces, in which
\begin{equation} \label{E:Lp-log}
	A(t)=
	\begin{cases}
		t^p (\log\frac1t)^{\alpha_0}
			& \text{near zero}
			\\
		t^p(\log t)^{\alpha_\infty}
			& \text{near infinity}
	\end{cases}
\end{equation}
with $p\in(1,\infty)$ and $\alpha_0,\alpha_\infty\in\R$ also obey
$A\in\Delta_2$ and $A\in\nabla_2$ and can serve as examples of Young functions
to which Corollary~\ref{C:Delta-2} applies as well.
\end{example}

Limiting cases of \eqref{E:Lp-log} as $p\to 1^+$ and $p\to\infty$ are captured in the next example.

\begin{example}
Let $A$ and $B$ be Young functions satisfying
\begin{equation*}
	A(t)=
	\begin{cases}
		t(\log\tfrac1t)^{\alpha_0+1}
			& \text{near zero}
			\\
		t(\log t)^{\alpha_\infty+1}
			& \text{near infinity}
	\end{cases}
	\quad\text{and}\quad
	B(t)=
	\begin{cases}
		t(\log\tfrac1t)^{\alpha_0}
			& \text{near zero}
			\\
		t(\log t)^{\alpha_\infty}
			& \text{near infinity},
	\end{cases}
\end{equation*}
in which $\alpha_0<-1$ and $\alpha_\infty\ge0$.
Then $A$ with $B$ satisfy the conditions~\ref{en:AB-conditions}
of Theorem~\ref{T:main-Orlicz}.

Another admissible example of Young functions $A$ and $B$ is
\begin{equation*}
	A(t)=
	\begin{cases}
		\exp(-t^{\beta_0})
			& \text{near zero}
			\\
		\exp t^{\beta_\infty}
			& \text{near infinity}
	\end{cases}
	\quad\text{and}\quad
	B(t)=
	\begin{cases}
		\exp\left(-t^{\frac{\beta_0}{1-\beta_0}}\right)
			& \text{near zero}
			\\
		\exp\left(t^{\frac{\beta_\infty}{1+\beta_\infty}}\right)
			& \text{near infinity}
	\end{cases}
\end{equation*}
with $\beta_0\in(0,1)$ and $\beta_\infty>0$.
\end{example}

The coefficients $c_k$ of the partial sums $S_n(f\chn)$ depend, of course, on $n$.
We state now two results which replace \ref{en:convergence-ri} in Theorem~\ref{T:ri} by assertions involving single Hermite series.

\begin{corollary} \label{C:single-1}
Let $A$ and $B$ be Young functions satisfying the hypothesis of
Theorem~\ref{T:main-Orlicz} and assumption \ref{en:AB-conditions}.
Suppose that $f\in E^A(\R)$.
Then given $\varepsilon>0$, there exist a smooth compactly supported 
function $h$ and $n_0\in\N$ such that
\begin{equation*}
	\int_{\R} B\bigl(|\chi_n S_n h - f|\bigr) < \varepsilon
\end{equation*}
for $n\ge n_0$, where $\chi_n$ are as in \eqref{E:chn-def}.
\end{corollary}

Next corollary asserts that any suitable $f$ can be obtained from a
convergent Hermite series by multiplying the sum of that series by a fixed
function. As our fixed function, we choose
\begin{equation} \label{E:g-def}
	g(x) = \frac{1}{1+x^{36}}
		\quad\text{for $x\in\R$}.
\end{equation}

\begin{corollary} \label{C:single-2}
Let $A$ and $B$ be Young functions satisfying condition~\ref{en:AB-conditions} of
Theorem~\ref{T:main-Orlicz}.
Let $g$ be the function from \eqref{E:g-def} and let $\chi_n$ be as in \eqref{E:chn-def}.
Then
\begin{equation*}
	\lim_{n\to\infty}
		\int_{\R} B\bigl(|\chi_n S_n (gf) - gf|\bigr) = 0
		\quad\text{for every $f\in E^A(\R)$}.
\end{equation*}
\end{corollary}

The general result, Theorem~\ref{T:ri}, is proved in Sec.~\ref{S:general} and applied to
the Orlicz context in Sec.~\ref{S:Orlicz}.  Before that, though, we obtain estimates
of certain terms in an expression for truncated partial sum
\eqref{E:truncated-Sn} due to Sansone in Sec.~\ref{S:sansone}.
These estimates
are then used in certain pointwise and norm inequalities in Sec.~\ref{S:ineq}
to get technical results in~Sec.~\ref{S:density} used in the proofs of all the
statements presented.

\section{The Sansone estimates}
\label{S:sansone}

\noindent
By $A \lesssim B$ and $A \gtrsim B$ we mean that $A \le C\,B$ and $A \ge C\,B$,
respectively, where $C$ is a positive constant independent of the appropriate
quantities involved in $A$ and $B$.
For brevity, we write $\chi_T = \chi_{(-T,T)}$, $T>0$.
We will make use of another well-known classical operator, namely the Hilbert transform
given for a suitable measurable $f\colon\R\to\R$ by
\begin{equation*}
	Hf(x) = \text{p.v.}\int_{\R} \frac{f(y)}{x-y}\,\d y
		\quad\text{for $x \in\R$}
\end{equation*}
whenever the integral exists \ae.

Now, we present the pointwise estimates of the partial sums of the Hermite series, given in the monograph by G.~Sansone~\citep{San:91}.
We keep most of Sansone's notation, but we also make several adjustments.
According to \citep[p.~372, Eq.~(5)]{San:91}, we have
\begin{equation}
	S_{n} (\fT)(x)
		= \sqrt{\frac{n+1}{2}} \int_{-T}^{T} k_n(x,y)f(y)\,\d y,
		\label{eq:SEsum_n}
\end{equation}
where
\begin{equation}
	k_n(x,y) = \frac{h_n(x)h_{n+1}(y) - h_n(y)h_{n+1}(x)}{x-y}.
		\label{eq:kern_n}
\end{equation}
By \citep[p.~325, Eq.~(14$_1$) and (14$_2$)]{San:91},
we can express the Hermite functions for odd and even $n$ by different formulas as
\begin{equation*}
	h_{n}(x) =
	\begin{cases}
		\displaystyle
		h_{n}(0)
			\biggl[  \cos \bigl( \sqrt{2n+1}\, x \bigr)
						+ \frac{x^3}{6}\frac{\sin \bigl( \sqrt{2n+1}\, x \bigr)}{\sqrt{2n+1}}
						+ R(n,x)
			\biggr]
			& \text{if $n$ is even}
			\\[\bigskipamount]
		\displaystyle
		\frac{h'_{n}(0)}{\sqrt{2n+1}}
			\biggl[  \sin \bigl( \sqrt{2n+1}\, x \bigr)
						- \frac{x^3}{6} \frac{ \cos \bigl( \sqrt{2n+1}\, x \bigr)}{\sqrt{2n+1}}
						+ R(n,x)
			\biggr]
			& \text{if $n$ is odd},
	\end{cases}
\end{equation*}
where the remainder $R$ satisfies the estimate
\begin{equation*}
	|R(n,x)| \lesssim \omega(n,x),
\end{equation*}
in which
\begin{equation} \label{E:omega-def}
	\omega(x,n) = x^2 (x^4+1) n^{-1} + x^{17/2}n^{-5/4},
\end{equation}
as follows by \citep[p.~327, Eq.~(15$_1$), (15$_2$) and p.~374]{San:91}.

Now, we compute the kernel $k_{n}$ if $n$ is even.
We obtain (compare with \citep[p.~373, Eq.~(7)]{San:91})
\begin{align} \label{eq:bigproducts}
	\begin{split}
	\sqrt{\frac{n+1}{2}} (x-y) k_{n}(x,y) =
		\\
		- c_{n}
		& \biggl[  \cos \bigl( \sqrt{2n+1}\, x \bigr)
					+ \frac{x^3}{6}\frac{\sin \bigl( \sqrt{2n+1}\, x \bigr)}{\sqrt{2n+1}}
					+ R(n,x)
		\biggr]
			\\
			\times
		& \biggl[  \sin \bigl( \sqrt{2n+3}\, y \bigr)
					- \frac{y^3}{6} \frac{ \cos \bigl( \sqrt{2n+3}\, y \bigr)}{\sqrt{2n+3}}
					+ R(n+1,y)
		\biggr]
			\\
		+ c_{n}
		& \biggl[  \cos \bigl( \sqrt{2n+1}\, y \bigr)
					+ \frac{y^3}{6}\frac{\sin \bigl( \sqrt{2n+1}\, y \bigr)}{\sqrt{2n+1}}
					+ R(n,y)
		\biggr]
			\\
			\times
		& \biggl[  \sin \bigl( \sqrt{2n+3}\, x \bigr)
					- \frac{x^3}{6} \frac{ \cos \bigl( \sqrt{2n+3}\, x \bigr)}{\sqrt{2n+3}}
					+ R(n+1,x)
		\biggr].
	\end{split}
\end{align}
Here, by
\citep[p.~373, Eq.~(8)]{San:91},
\begin{equation} \label{E:cn-def}
	c_{n} = -\frac1\pi\sqrt{\frac{n+1}{2}}\frac{h_{n}(0)\,h'_{n+1}(0)}{\sqrt{2n+3}}
		= 1 + \frac{\varepsilon}{6n}
		\quad\text{and}\quad |\varepsilon| < 3.
\end{equation}
Expanding the brackets in \eqref{eq:bigproducts} and plugging them into
\eqref{eq:SEsum_n}, we get
\begin{equation} \label{E:SEsum}
	S_{n} (\fT)(x)
		 = c_{n}\sum_{k=1}^{7} \int_{-T}^{T} \frac{ K_k^{(n)}(x,y)}{ x-y }f(y)\,\d y
\end{equation}
in which
\begin{align} \label{E:K1}
	\begin{split}
	K_1^{(n)}(x,y)
		& = \sin \bigl( \sqrt{2n+3}\, x \bigr)\cos \bigl( \sqrt{2n+1}\, y \bigr)
			\\
		&\quad - \sin \bigl( \sqrt{2n+3}\, y \bigr)\cos \bigl( \sqrt{2n+1}\, x \bigr),
	\end{split}
\end{align}
\begin{align*}
	K_2^{(n)}(x,y)
		&= -\frac{1}{6\sqrt{2n+1}}
			\Bigl[ - x^3 \sin \bigl( \sqrt{2n+1}\, x \bigr)\sin \bigl( \sqrt{2n+3}\, y \bigr)
			\\
		& \quad	 + y^3 \sin \bigl( \sqrt{2n+1}\, y \bigr)\sin \bigl( \sqrt{2n+3}\, x \bigr)
			\Bigr],
\end{align*}
\begin{align*}
	K_3^{(n)}(x,y)
		&= \frac{1}{6\sqrt{2n+3}}
			\Bigl[ y^3 \cos \bigl( \sqrt{2n+1}\, x \bigr)\cos \bigl( \sqrt{2n+3}\, y \bigr)
			\\
		&\quad - x^3\cos \bigl( \sqrt{2n+1}\, y \bigr)\cos \bigl( \sqrt{2n+3}\, x \bigr)
			\Bigr],
\end{align*}
\begin{align*}
	K_4^{(n)}(x,y)
		&= \frac{x^3y^3}{36\sqrt{(2n+1)(2n+3)}}
			\Bigl[ \sin \bigl( \sqrt{2n+1}\, y \bigr)\cos \bigl( \sqrt{2n+3}\, x \bigr)
			\\
		&\quad	-\cos \bigl( \sqrt{2n+1}\, y \bigr)\sin \bigl( \sqrt{2n+3}\, x \bigr)
			\Bigr],
\end{align*}
\begin{align*}
	K_5^{(n)}
		& = R(n+1,x)\cos \bigl( \sqrt{2n+1}\,y \bigr)
			- R(n+1,y)\cos \bigl( \sqrt{2n+1}\,x \bigr)
			\\
		& \quad
			+ R(n,x)\sin \bigl( \sqrt{2n+3}\,y \bigr)
			- R(n,y)\sin \bigl( \sqrt{2n+3}\,x \bigr)
		,
\end{align*}
\begin{align*}
	K_6^{(n)}
		& = R(n+1,y)\frac{x^3}{6} \frac{\sin \bigl( \sqrt{2n+1}\,x \bigr)}{\sqrt{2n+1}} 
			- R(n+1,x)\frac{y^3}{6} \frac{\sin \bigl( \sqrt{2n+1}\,y \bigr)}{\sqrt{2n+1}} 
			\\
		& \quad
			+ R(n,y)\frac{x^3}{6} \frac{\cos \bigl( \sqrt{2n+3}\,x \bigr)}{\sqrt{2n+3}} 
			- R(n,x)\frac{y^3}{6} \frac{\cos \bigl( \sqrt{2n+3}\,y \bigr)}{\sqrt{2n+3}} 
\end{align*}
and
\begin{equation*}
	K_7^{(n)}
		= R(n+1,x) R(n,y) - R(n+1,y) R(n,x).
\end{equation*}
If $n$ is odd, the kernel $k_{n}$ and also the partial sum $S_{n}(\fT)$
can be represented in an analogous way. We omit the details.

Let us estimate all the terms from \eqref{E:SEsum}.
We have
\begin{align} \label{E:SEK1}
	\begin{split}
	\biggl| \int_{-T}^{T} \frac{K_1^{(n)}(x,y)}{x-y} f(y)\,\d y \biggr|
		& \le 
			\biggl| \int_{-T}^{T} \frac{ \cos \bigl( \sqrt{2n+1}\, y \bigr) }{x-y} f(y)\,\d y \biggr|
		+ \biggl| \int_{-T}^{T} \frac{ \sin \bigl( \sqrt{2n+3}\, y \bigr) }{x-y} f(y)\,\d y \biggr|
				\\
		& = \bigl| H (m_1 \fT)(x) \bigr|
				+ \bigl| H (m_2 \fT)(x) \bigr|,
				\quad x\in(-T,T),
	\end{split}
\end{align}
where $\|m_j\|_{L^\infty(-T,T)} \le 1$, $j=1,2$.
Next
\begin{align} \label{E:SEK2}
	\begin{split}
	\biggl| \int_{-T}^{T} \frac{K_2^{(n)}(x,y)}{x-y} f(y)\,\d y \biggr|
		& \le \frac{T^3}{6\sqrt{2n+1}}
				\biggl| \int_{-T}^{T} \frac{ \sin \bigl( \sqrt{2n+3}\, y \bigr) }{x-y} f(y)\,\d y \biggr|
				\\
		&\quad 	+ \frac{1}{6\sqrt{2n+1}}
				\biggl| \int_{-T}^{T} \frac{ y^3 \sin \bigl( \sqrt{2n+1}\, y \bigr) }{x-y} f(y)\,\d y \biggr|
			 \\
		& \lesssim \frac{T^3}{\sqrt{n}}
				\bigl| H (m_2 \fT)(x) \bigr|
			+ \frac{1}{\sqrt{n}}
				\bigl| H (m_3 \fT)(x) \bigr|,
				\quad x\in(-T,T),
	\end{split}
\end{align}
where $\|m_3\|_{L^\infty(-T,T)} \le T^3$,
and similarly
\begin{align} \label{E:SEK3}
	\begin{split}
	\biggl| \int_{-T}^{T} \frac{K_3^{(n)}(x,y)}{x-y} f(y)\,\d y \biggr|
		& \le \frac{1}{6\sqrt{2n+3}}
				\biggl| \int_{-T}^{T} \frac{ y^3 \cos \bigl( \sqrt{2n+3}\, y \bigr) }{x-y} f(y)\,\d y \biggr|
				\\
		&\quad 	+ \frac{T^3}{6\sqrt{2n+3}}
				\biggl| \int_{-T}^{T} \frac{ \cos \bigl( \sqrt{2n+1}\, y \bigr) }{x-y} f(y)\,\d y \biggr|
			 \\
		& \lesssim \frac{1}{\sqrt{n}}
				\bigl| H (m_4 \fT)(x) \bigr|
			+ \frac{T^3}{\sqrt{n}}
				\bigl| H (m_1 \fT)(x) \bigr|,
				\quad x\in(-T,T),
	\end{split}
\end{align}
where $\|m_4\|_{L^\infty(-T,T)} \le T^3$.
For $K_4$, we have
\begin{align} \label{E:SEK4}
	\begin{split}
	\biggl| \int_{-T}^{T} \frac{K_4^{(n)}(x,y)}{x-y} f(y)\,\d y \biggr|
		& \le \frac{T^3}{36\sqrt{(2n+1)(2n+3)}}
				\biggl| \int_{-T}^{T} \frac{ y^3 \sin \bigl( \sqrt{2n+1}\, y \bigr) }{x-y} f(y)\,\d y \biggr|
				\\
		&\quad 	+ \frac{T^3}{36\sqrt{(2n+1)(2n+3)}}
				\biggl| \int_{-T}^{T} \frac{ y^3 \cos \bigl( \sqrt{2n+1}\, y \bigr) }{x-y} f(y)\,\d y \biggr|
			 \\
		& \lesssim \frac{T^3}{n}
			\Bigl[
				\bigl| H (m_3 \fT)(x) \bigr|
				+
				\bigl| H (m_4 \fT)(x) \bigr|
			\Bigr],
				\quad x\in(-T,T),
	\end{split}
\end{align}
and for $K_5$, we have
\begin{align} \label{E:SEK5}
	\begin{split}
	\biggl| \int_{-T}^{T} \frac{K_5^{(n)}(x,y)}{x-y} f(y)\,\d y \biggr|
		& \le \bigl| R(n+1,x) \bigr|
				\biggl| \int_{-T}^{T} \frac{ \cos \bigl( \sqrt{2n+1}\, y \bigr) }{x-y} f(y)\,\d y \biggr|
				\\
		&\quad + \bigl| R(n,x) \bigr|
				\biggl| \int_{-T}^{T} \frac{ \sin \bigl( \sqrt{2n+3}\, y \bigr) }{x-y} f(y)\,\d y \biggr|
				\\
		&\quad 	+
				\biggl| \int_{-T}^{T} \frac{ R(n+1,y) }{x-y} f(y)\,\d y \biggr|
						+
				\biggl| \int_{-T}^{T} \frac{ R(n,y) }{x-y} f(y)\,\d y \biggr|
			 \\
		& \lesssim \omega(T,n)
			\Bigl[
				\bigl| H (m_1 \fT)(x) \bigr|
				+
				\bigl| H (m_2 \fT)(x) \bigr|
			\Bigr]
			 \\
		& \quad +
			\Bigl[
				\bigl| H (m_5 \fT)(x) \bigr|
				+
				\bigl| H (m_6 \fT)(x) \bigr|
			\Bigr],
				\quad x\in(-T,T),
	\end{split}
\end{align}
where $\|m_j\|_{L^\infty(-T,T)} \le \omega(T,n)$, $j=5,6$.
Next
\begin{align} \label{E:SEK6}
	\begin{split}
	\biggl| \int_{-T}^{T} \frac{K_6^{(n)}(x,y)}{x-y} f(y)\,\d y \biggr|
		& \le \frac{\bigl| R(n+1,x) \bigr|}{6\sqrt{2n+1}}
				\biggl| \int_{-T}^{T} \frac{ y^3 \sin \bigl( \sqrt{2n+1}\, y \bigr) }{x-y} f(y)\,\d y \biggr|
				\\
		&\quad + \frac{ \bigl| R(n,x) \bigr|}{6\sqrt{2n+3}}
				\biggl| \int_{-T}^{T} \frac{ y^3 \cos \bigl( \sqrt{2n+3}\, y \bigr) }{x-y} f(y)\,\d y \biggr|
				\\
		&\quad 	+ \frac{T^3}{6\sqrt{2n+1}}
				\biggl| \int_{-T}^{T} \frac{ R(n+1,y) }{x-y} f(y)\,\d y \biggr|
			 \\
		&\quad 	+ \frac{T^3}{6\sqrt{2n+3}}
				\biggl| \int_{-T}^{T} \frac{ R(n,y) }{x-y} f(y)\,\d y \biggr|
			 \\
		& \lesssim \frac{\omega(T,n)}{\sqrt{n}}
			\Bigl[
				\bigl| H (m_3 \fT)(x) \bigr|
				+
				\bigl| H (m_4 \fT)(x) \bigr|
			\Bigr]
			 \\
		& \quad + \frac{T^3}{\sqrt{n}}
			\Bigl[
				\bigl| H (m_5 \fT)(x) \bigr|
				+
				\bigl| H (m_6 \fT)(x) \bigr|
			\Bigr],
				\quad x\in(-T,T).
	\end{split}
\end{align}
And finally
\begin{align} \label{E:SEK7}
	\begin{split}
	\biggl| \int_{-T}^{T} \frac{K_7^{(n)}(x,y)}{x-y} f(y)\,\d y \biggr|
		& \le \bigl| R(n+1,x) \bigr|
				\biggl| \int_{-T}^{T} \frac{ R(n,y) }{x-y} f(y)\,\d y \biggr|
				\\
		&\quad + \bigl| R(n,x) \bigr|
				\biggl| \int_{-T}^{T} \frac{ R(n+1,y) }{x-y} f(y)\,\d y \biggr|
				\\
		& \lesssim \omega(T,n)
			\Bigl[
				\bigl| H (m_5 \fT)(x) \bigr|
				+
				\bigl| H (m_6 \fT)(x) \bigr|
			\Bigr],
				\quad x\in(-T,T).
	\end{split}
\end{align}
Note that all of the above integrals are to be taken in the same Cauchy principal value sense as the one for the Hilbert transform.

Summarizing the estimates in \eqref{E:SEsum}, and using that $c_n$ is bounded, we have
\begin{align} \label{E:SE-summary}
	\begin{split}
        |S_{n} (\fT)(x)|
            \lesssim \sum_{k=1}^7\,
            \biggl| \int_{-T}^{T} \frac{K_7^{(n)}(x,y)}{x-y} f(y)\,\d y \biggr|
    		  \lesssim \sum_{j=1}^6 \gamma_j \bigl| H (m_j \fT)(x) \bigr|,
        \quad x\in(-T,T),
	\end{split}
\end{align}
in which
\begin{equation} \label{E:SE-m-bounds}
    \gamma_j
        = 1 + \frac{T^3}{\sqrt{n}} + \omega(T,n)
    \text{ for $j=1,2,5,6$}
    \quad\text{and}\quad
    \gamma_j
        = \frac1{\sqrt{n}} + \frac{T^3}{n} + \frac{\omega(T,n)}{\sqrt{n}}
    \text{ for $j=3,4$}.
\end{equation}
Recall that
$\|m_j\|_{L^\infty(-T,T)} \le 1$ for $j=1,2$,
$\|m_j\|_{L^\infty(-T,T)} \le T^3$ for $j=3,4$ and
$\|m_j\|_{L^\infty(-T,T)} \le \omega(T,n)$ for $j=5,6$,
and that, from~\eqref{E:omega-def},
\begin{equation} \label{E:omega-bound}
    \omega(T,n)\lesssim \frac{T^{{17}/{2}}}{n}.
\end{equation}

We conclude the section with an alternative expression for $S_{n}(\fT)$.
Using trigonometric identities, we have
\begin{align*} 
	K_1^{(n)}(x,y)
		= \sin \bigl( N(x-y) \bigr)
	       + \cos \bigl(N(x+y)\bigr) \sin \bigl( (x-y)/2N \bigr)
		  - 2 \sin^2 \bigl( (x+y)/4N \bigr)
			\sin\bigl(N(x-y)\bigr),
\end{align*}
whence with $x\in(-T,T)$, one has
\begin{align*}
	\int_{-T}^{T} \frac{K_1^{(n)}(x,y)}{x-y}f(y)\,\d y
		& = \int_{-T}^{T} \frac{\sin\bigl(N(x-y)\bigr)}{x-y}f(y)\,\d y
		  \\
		&\quad + \int_{-T}^{T}
			\frac{ \cos\bigl( N(x+y) \bigr)}{x-y} \sin\Bigl(\frac{x-y}{2N}\Bigr) f(y)\,\d y
			\\
		&\quad - \int_{-T}^{T}
			\sin^2\Bigl( \frac{x+y}{4N} \Bigr) \frac{ \sin\bigl( N(x-y) \bigr)}{ x-y }f(y)\,\d y.
\end{align*}
In sum, using \eqref{E:SEsum}, we get
\begin{align} \label{E:Sneq}
    \begin{split} 
        S_{n} (\fT)(x)
            & = \int_{-T}^{T} \frac{\sin\bigl(N(x-y)\bigr)}{x-y}f(y)\,\d y
    		  + c_{n}\sum_{k=2}^{7} \int_{-T}^{T} \frac{ K_k^{(n)}(x,y)}{ x-y }f(y)\,\d y 
              + c_n I(x) + c_n II(x)
    \end{split}
\end{align}
for $x\in(-T,T)$, where
\begin{equation} \label{E:I-def}
    I(x) = \int_{-T}^{T} \frac{ \cos\bigl( N(x+y) \bigr)}{x-y} \sin\Bigl(\frac{x-y}{2N}\Bigr) f(y)\,\d y
\end{equation}
and
\begin{equation} \label{E:II-def}
    II(x) = \int_{-T}^{T} \sin^2\Bigl( \frac{x+y}{4N} \Bigr) \frac{ \sin\bigl( N(x-y) \bigr)}{ x-y }f(y)\,\d y.
\end{equation}
Collecting the estimates \eqref{E:SEK2}--\eqref{E:SEK7} and using that $c_n$ is bounded, we get the estimate for the sum in \eqref{E:Sneq}, namely,
\begin{align} \label{E:SE-summary-2}
	\begin{split}
        \sum_{k=2}^7\,
            \biggl| \int_{-T}^{T} \frac{K_7^{(n)}(x,y)}{x-y} f(y)\,\d y \biggr|
    		  \lesssim \sum_{j=1}^6 \widetilde\gamma_j \bigl| H (m_j \fT)(x) \bigr|,
        \quad x\in(-T,T),
	\end{split}
\end{align}
where
\begin{equation} \label{E:SE-m-bounds-2}
    \widetilde\gamma_j
        = \gamma_j - 1
        = \frac{T^3}{\sqrt{n}} + \omega(T,n)
    \text{ for $j=1,2$}
    \quad\text{and}\quad
    \widetilde\gamma_j = \gamma_j
    \text{ otherwise.}
\end{equation}
Recall that all the constants involved in $\lesssim$ are absolute, independent of $n$, $T$ and $f$.

\section{Rearrangement invariant spaces and ensuing inequalities} \label{S:ineq}

\noindent
Let $\RR$ denote either $\R$ or $\R_+$. Let $\MM(\RR)$ be the class
of real-valued measurable functions on $\RR$ and $\MM^+(\RR)$ the set of
nonnegative functions in $\MM(\RR)$. For $f\in\MM(\RR)$,
$f^*\colon(0,\infty)\to[0,\infty]$ denotes the non-increasing rearrangement of
$f$ defined by
\begin{equation*}
	f^*(t) := \inf
		\bigl\{ \lambda>0:
			|\{ x\in\RR: |f(x)| > \lambda \}| \le t
		\bigr\}
	\quad\text{for $t>0$},
\end{equation*}
where $|E|$ is the Lebesgue measure of a measurable set $E\subset\RR$.

A mapping $\rho\colon\MM^+(\RR)\to[0,\infty]$ is called a re\-arr\-ange\-ment-invariant (\ri for short)\ Banach function norm on~$\MM^+(\RR)$, if for all $f$, $g$, $f_n$, $n\in\N$, in $\MM^+(\RR)$, for all constants $a\ge 0$ and for every measurable subset $E$ of $\RR$, the following properties hold:

\begin{enumerate}[label={\rm(P\arabic*)},left=0pt .. 3\parindent]
\item $\rho(f) = 0$ iff $f=0$ \ae;
			$\rho(af) = a\rho(f)$;
			$\rho(f+g) \le \rho(f)+\rho(g)$
\item	$0\le f\le g$ \ae implies $\rho(f)\le\rho(g)$
\item	$0\le f_n\uparrow f$ \ae implies $\rho(f_n)\uparrow\rho(f)$
\item $|E|<\infty$ implies $\rho(\chi_E)<\infty$
\item $|E|<\infty$ implies $\int_E f\,\d x\le c_E \rho(f)$
\item $\rho(f)=\rho(g)$ whenever $f^*=g^*$
\end{enumerate}
for some constant $0<c_E<\infty$ depending on $E$ but independent of $f$.
We also call $\rho$ just \ri norm for brevity.

Given \ri norm $\rho_X$,
the collection $X(\RR)$ of all functions $f$ in $\MM(\RR)$
for which $\|f\|_{X(\RR)}=\rho_X(|f|)$ is finite is called a rearrangement-invariant
Banach function space (or just \ri space).
For $E\subset\RR$ measurable, we write
\begin{equation*}
	\|f\|_{X(E)} = \|f\chi_E\|_{X(\RR)}.
\end{equation*}
A fundamental result of Luxemburg \citep[Chapter~2, Theorem~4.10]{Ben:88}
asserts that to every \ri space $X(\RR)$ there corresponds
an \ri space $X\dom$ such that
\begin{equation*}
	\|f\|_{X(\RR)} = \|f^*\|_{X\dom}
	\quad\text{for $f\in X(\RR)$}.
\end{equation*}
This space is called the representation
space of $X(\RR)$. Clearly, if $\RR=\R_+$, the space $X(\RR)$
and its representation space coincide.

The following estimate of Hilbert transform is well known.
It asserts that
there is a constant $C>0$ such that
for any $f\in\MM(\R)$.
\begin{equation} \label{E:HleS}
	(H f)^*(t)
		\le C\, S f^*(t)
		\quad\text{for $t\in\R_+$.}
\end{equation}
The proof can be found in
\citep[Chapter~3, Theorem~4.7]{Ben:88}. 
This inequality yields an immediate consequence.

\begin{lemma} \label{L:HleS}
Let $\rho_Z$ be an \ri norm and let $T\in(0,\infty]$.
Assume that $f,m\in\MM(\R)$. Then
\begin{equation*}
	\|H(m\fT)\|_{Z(-T,T)}
		\lesssim \|m\|_{L^\infty(-T,T)} \|Sf^*\|_{Z\dom}.
\end{equation*}
\end{lemma}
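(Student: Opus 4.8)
The plan is to reduce the whole estimate to the pointwise rearrangement inequality \eqref{E:HleS} combined with the monotonicity of the Stieltjes transform, chaining together the rearrangement-invariance axioms throughout. First I would discard the truncation on the left-hand side. By the definition of the norm on a subset, $\|H(m\fT)\|_{Z(-T,T)} = \|H(m\fT)\chi_T\|_{Z(\R)}$, and since $|H(m\fT)|\chi_T \le |H(m\fT)|$ pointwise on $\R$, property (P2) gives
\[
	\|H(m\fT)\|_{Z(-T,T)} \le \|H(m\fT)\|_{Z(\R)} = \bigl\|(H(m\fT))^*\bigr\|_{Z(0,\infty)},
\]
where the last equality is Luxemburg's representation identity passing to the representation space $Z(0,\infty)$.

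Next I would apply \eqref{E:HleS} to the function $m\fT\in\MM(\R)$, which yields $(H(m\fT))^*(t)\le C\,S(m\fT)^*(t)$ for $t\in(0,\infty)$. Monotonicity (P2) of the representation norm then produces
\[
	\bigl\|(H(m\fT))^*\bigr\|_{Z(0,\infty)} \le C\,\bigl\|S(m\fT)^*\bigr\|_{Z(0,\infty)}.
\]
It remains to extract the factor $\|m\|_{L^\infty(-T,T)}$, and this is where the only real care is needed. I would use the scalar pointwise majorization $|m\fT|\le \|m\|_{L^\infty(-T,T)}\,|f|$, which holds because $m\fT$ vanishes off $(-T,T)$ while $|m|\le\|m\|_{L^\infty(-T,T)}$ there. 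Since the non-increasing rearrangement is order-preserving and positively homogeneous, this transfers to $(m\fT)^* \le \|m\|_{L^\infty(-T,T)}\,f^*$ on $(0,\infty)$. As $S$ is linear with a nonnegative kernel, it is both monotone and positively homogeneous, so $S(m\fT)^* \le \|m\|_{L^\infty(-T,T)}\,Sf^*$, and a final use of (P2) together with the homogeneity in (P1) gives the claimed bound.

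The step I would flag as the main obstacle, or rather the one demanding the most caution, is precisely this middle transfer: one must resist replacing $(m\fT)^*$ by any product of rearrangements, since the non-increasing rearrangement does not respect products. The argument deliberately only ever invokes the scalar majorization $|m\fT|\le \|m\|_{L^\infty(-T,T)}\,|f|$, which passes cleanly through both the rearrangement and the positive operator $S$ by monotonicity alone; no structural interaction between $m$ and $f$ is required. Everything else is a routine chaining of the axioms (P1)–(P2) with Luxemburg's identity, and the case $T=\infty$ (where $\fT=f$) is covered verbatim by the same computation.
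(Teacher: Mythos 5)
Your proof is correct and follows exactly the route the paper intends: the paper presents Lemma~\ref{L:HleS} as an immediate consequence of the rearrangement estimate \eqref{E:HleS}, and your argument spells out precisely that chain (Luxemburg representation, \eqref{E:HleS} applied to $m\fT$, the scalar majorization $|m\fT|\le\|m\|_{L^\infty(-T,T)}|f|$ passed through the rearrangement and the positive operator $S$, then the norm axioms). Your cautionary remark about not rearranging the product is well placed, but there is no divergence from the paper's approach.
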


The next result shows an inequality between
Hermite partial sums and the Stieltjes transform.
Its proof is a consequence of the Sansone estimates
introduced in Sec.~\ref{S:sansone}.

\begin{lemma} \label{L:SnleS}
Let $\rho_Z$ be an \ri norm. Then for any $f\in\MM(\R)$, $T>0$ and $n\in\N$, one has
\begin{equation} \label{E:SnleS}
	\|S_n (\fT)\|_{Z(-T,T)}
		\lesssim \biggl(1+\frac{T^{17}}{\sqrt{n}}\biggr) \|S f^*\|_{Z\dom}.
\end{equation}
\end{lemma}

\begin{proof}
Using Sansone estimates \eqref{E:SE-summary} together with Lemma~\ref{L:HleS} yields
\begin{equation}
    \begin{split}
    	\|S_n(\fT)\|_{Z(-T,T)}
    		\lesssim \sum_{j=1}^{6} \gamma_j \|H(m_j\fT)\|_{Z(-T,T)}
    		\lesssim
                \Biggl(
                    \sum_{j=1}^{6} \gamma_j \|m_j\|_{L^\infty(-T,T)} 
                \Biggr)
                \|Sf^*\|_{Z\dom}.
    \end{split}
\end{equation}
Collecting estimates~\eqref{E:SE-m-bounds} and \eqref{E:omega-bound} we continue by
\begin{align*}
    \sum_{j=1}^{6} \gamma_j \|m_j\|_{L^\infty(-T,T)}
        & \lesssim
            \Biggl(
                1 + \frac{T^3}{\sqrt{n}} + \omega(T,n)
            \Biggr)
            \bigl(
                \|m_1\|_{L^\infty(-T,T)} + \|m_2\|_{L^\infty(-T,T)}
            \bigr)
            \\
        & \quad +
            \Biggl(
                \frac1{\sqrt{n}} + \frac{T^3}{n} + \frac{\omega(T,n)}{\sqrt{n}}
            \Biggr)
            \bigl(
                \|m_3\|_{L^\infty(-T,T)} + \|m_4\|_{L^\infty(-T,T)}
            \bigr)
            \\
        & \quad +
            \Biggl(
                1 + \frac{T^3}{\sqrt{n}} + \omega(T,n)
            \Biggr)
            \bigl(
                \|m_5\|_{L^\infty(-T,T)} + \|m_6\|_{L^\infty(-T,T)}
            \bigr)
            \\
        & \lesssim  
            \Biggl(
                1 + \frac{T^3}{\sqrt{n}} + \frac{T^{17/2}}{n}
            \Biggr)
            +
            \Biggl(
                \frac1{\sqrt{n}} + \frac{T^3}{n} + \frac{T^{17/2}}{n\sqrt{n}}
            \Biggr)
            T^3
            +
            \Biggl(
                1 + \frac{T^3}{\sqrt{n}} + \frac{T^{17/2}}{n}
            \Biggr)
            \frac{T^{17/2}}{n}
            \\
        & \lesssim 1 + \frac{T^{17}}{\sqrt n}.
    \qedhere
\end{align*}
\end{proof}

\begin{lemma}
\label{L:SleSn}
Let $g\in\MM^+\dom$ and
let $\chn$ be given as in \eqref{E:chn-def}.
Then
\begin{equation} \label{E:dec13}
	Sg(x)
		\lesssim \liminf_{n\to\infty} \sum_{k=1}^4 \sum_{m=k-3}^k
		\chn(x) |S_n (f_{k,m}\chi_n)(x)|
		\quad\text{for $x\in\R_+$}
\end{equation}
in which
\begin{equation} \label{E:fkm-def}
	f_{k,m}(x) = g_{k,m}(-x)\chi_{(-\infty,0)}(x)
		\quad\text{for $x\in\R$}
\end{equation}
and
\begin{equation}
	g_{k,m}(y) = g\Bigl( y - \frac{m}{4}\frac{\pi}{N} \Bigr) \chi_{I_k} (y)
		\quad\text{for $y\in\R$},
\end{equation}
where
\begin{equation} \label{E:N-def}
	N = \frac{1}{2}\bigl( \sqrt{2n+1} + \sqrt{2n+3} \bigr)
\end{equation}
and
\begin{equation}
	I_k = \bigcup_{j=0}^\infty
		\Bigl(
            \frac{k}{4}\frac{\pi}{N} + \frac{j\pi}{N},
		    \frac{k+1}{4}\frac{\pi}{N} + \frac{j\pi}{N}
        \Bigr),
		\quad k=0,1,\ldots
\end{equation}
\end{lemma}

\begin{proof}
Let $f(y)=g(-x)\chi_{(-\infty, 0)}(y)$ for $g\in\MM^+\dom$.
We aim to estimate $S_n(\fT)$ by estimating all the terms on the right-hand side in equality~\eqref{E:Sneq}.
First,
\begin{equation*}
    H(m\fT)(x)
        = \int_{-T}^{T} \frac{m(y)g(-y)}{x-y}\,\d y
        = \int_{0}^{T} \frac{m(-y)g(y)}{x+y}\,\d y,
\end{equation*}
so that
\begin{equation} \label{E:HleMInt}
    \bigl|H(m\fT)(x)\bigr|
        \le \|m\|_{L^\infty(-T,T)} \int_{0}^{T} \frac{g(y)}{x+y}\,\d y
    \quad\text{for $x\in(0,T)$.}
\end{equation}
Combining the Sansone estimates~\eqref{E:SE-summary-2} with the estimate~\eqref{E:HleMInt}, we obtain
\begin{equation*}
    \sum_{k=2}^7\,
        \biggl| \int_{-T}^{T} \frac{K_7^{(n)}(x,y)}{x-y} f(y)\,\d y \biggr|
           \lesssim \sum_{j=1}^6 \widetilde\gamma_j \bigl| H (m_j \fT)(x) \bigr|
    	   \lesssim \biggl(\sum_{j=1}^6 \widetilde\gamma_j \|m_j\|_{L^\infty(-T,T)}\biggr) 
                \int_{0}^{T} \frac{g(y)}{x+y}\,\d y
\end{equation*}
for $x\in(0,T)$.
Similarly as in the proof of Lemma~\ref{L:SnleS}, we have
\begin{equation}\label{E:sums-gm}
    \sum_{j=1}^6 \widetilde\gamma_j \|m_j\|_{L^\infty(-T,T)}
        \lesssim \frac{T^{17}}{\sqrt{n}}.
\end{equation}
Next, we estimate the terms \eqref{E:I-def} and \eqref{E:II-def}, still with our choice of $f(y)=g(-y)\chi_{(-\infty, 0)}(y)$.
With $x\in(0,T)$,
\begin{align*}
    |I(x)|
        & \le \int_0^T \biggl|\frac{\cos\bigl(N(x-y)\bigr)}{x+y}\biggr| \sin\Bigl(\frac{x+y}{2N}\Bigr) g(y)\,\d y
          \lesssim \int_0^T \frac{1}{x+y} \frac{x+y}{2N}g(y)\,\d y
            \\
        & \lesssim \frac{T}{\sqrt{n}} \int_0^T \frac{g(y)}{T}\,\d y
          \lesssim \frac{T}{\sqrt{n}} \int_0^T \frac{g(y)}{T+y}\,\d y
          \lesssim \frac{T}{\sqrt{n}} \int_0^T \frac{g(y)}{x+y}\,\d y
\end{align*}
and
\begin{align*}
    |II(x)|
        & \le \int_0^T
            \sin^2\Bigl( \frac{x-y}{4N} \Bigr)
            \bigl| \sin\bigl( N(x+y) \bigr) \bigr|
            \frac{g(y)}{x+y}\,\d y
          \lesssim \int_0^T \Bigl(\frac{x-y}{4N}\Bigr)^2\frac{g(y)}{x+y}\,\d y
          \lesssim \frac{T^2}{n} \int_0^T \frac{g(y)}{x+y}\,\d y.
\end{align*}
Applying all the estimates to equation~\eqref{E:Sneq}, we obtain
\begin{align} \label{E:Sneq-2}
    \begin{split}
    \bigl|S_{n} (\fT)(x)\bigr|
        & \gtrsim c_n \int_{-T}^{T} \frac{\sin\bigl(N(x-y)\bigr)}{x-y}f(y)\,\d y
		- \biggl| 
                c_{n}\sum_{k=2}^{7} \int_{-T}^{T} \frac{ K_k^{(n)}(x,y)}{ x-y }f(y)\,\d y 
          \biggr|
        - |c_n I(x)| - |c_n II(x)|
            \\ 
        & \gtrsim c_n \int_{0}^{T} \sin\bigl(N(x+y)\bigr)\frac{g(y)}{x+y}\,\d y
            - \frac{T^{17}}{\sqrt{n}} \int_0^T \frac{g(y)}{x+y}\,\d y.
    \end{split}
\end{align}

Now, fix $k\in\{1,2,3,4\}$ and choose $\sigma(K)\in\{1,2,3,4\}$ so that $k+\sigma(k)=1 \pmod 4$.
Then, for $y\in I_k$ and $x\in I_{\sigma(k)}$ one has
\begin{equation*}
	\frac{\pi}{4N} \le x + y \le \frac{3\pi}{4N}
		\quad \Big( \mathop{\mathrm{mod}}\nolimits \frac{\pi}{ N } \Bigr)
\end{equation*}
and $\sin\bigl(N(x+y)\bigr)\ge \sqrt{2}/2$.
Therefore, taking $f=f_{k,m}$ and $T=T_n$, we have
\begin{equation*}
	\int_{0}^{T_n} \sin \bigl(N(x+y)\bigr) \frac{g_{k,m}(y)}{x+y}\,\d y
		\gtrsim \int_0^{T_n} \frac{g_{k,m}(y)}{x+y}\,\d y
    \quad\text{for $x\in I_{\sigma(k)}$}
\end{equation*}
and \eqref{E:Sneq-2} yields
\begin{equation*}
    \bigl|S_{n} (f_{k,m}\chn)(x)\bigr|
        \gtrsim \Bigl(c_n - \frac{T_n^{17}}{\sqrt{n}}\Bigr)
            \int_0^{T_n} \frac{g_{k,m}(y)}{x+y}\,\d y
        \quad\text{for $x\in I_{\sigma(k)}$}.
\end{equation*}
Further, by the change of variables, we have for $x\in I_{\sigma(k)}$ that
\begin{align*}
    \int_0^{T_n} \frac{g_{k,m}(y)}{x+y}\,\d y
	   = \int_{0}^{T_n}
			\frac{g\left(y - \frac{m}{4}\frac{\pi}{N} \right)}{x+y}\chi_{I_k}(y)\,\d y
	   = \int_{0}^{T_n-\frac{m\pi}{4N}}
			\frac{g(y)\chi_{I_{k-m}}(y)}{x+y+\frac{m}{4}\frac{\pi}{N}}\,\d y.
\end{align*}
Moreover, since $x\in I_{\sigma(k)}$ it is $x\ge \frac{\pi}{4N}$
and since $m\le k\le 4$, it follows that
\begin{equation*}
	x + y + \frac{m}{4}\frac{\pi}{N} \le x + y +mx \le 5(x+y)
\end{equation*}
for any $y\ge 0$. Thus
\begin{equation*}
	\bigl| S_n (f_{k,m}\chn)(x) \bigr|
        \gtrsim \Bigl(c_n - \frac{T_n^{17}}{\sqrt{n}}\Bigr)
            \int_{0}^{T_n-\frac{\pi}{N}}
			\frac{g(y)}{x+y}\chi_{I_{k-m}}(y)\,\d y.
\end{equation*}
Summing over $m=k-3,\dots,k$, we obtain
\begin{equation*}
    \Bigl(c_n - \frac{T_n^{17}}{\sqrt{n}}\Bigr)
	\int_0^{T_n-\frac{\pi}{N}} \frac{g(y)}{x+y}\,\d y
		\lesssim \sum_{m=k-3}^k
			\bigl| S_n (f_{k,m}\chn)(x) \bigr|
		\quad\text{for $x\in I_{\sigma(k)}$}
\end{equation*}
and another summing over $k$ yields
\begin{equation*}
    \Bigl(c_n - \frac{T_n^{17}}{\sqrt{n}}\Bigr)
	\int_0^{T_n-\frac{\pi}{N}} \frac{g(y)}{x+y}\,\d y
		\lesssim \sum_{k=1}^4 \sum_{m=k-3}^k
			\bigl| S_n (f_{k,m}\chn)(x) \bigr|
		\quad\text{for $x\in(\tfrac{\pi}{4N},\infty)$.}
\end{equation*}
Finally, we multiply both sides by $\chn$ and then,
taking the limes inferior as $n\to \infty$,
we get
\begin{equation*}
	\int_0^{\infty} \frac{g(y)}{x+y}\,\d y
		\lesssim \liminf_{n\to\infty}\sum_{k=1}^4 \sum_{m=k-3}^k
			\chn(x)\bigl| S_n (f_{k,m}\chn)(x) \bigr|
			\quad\text{for $x\in\R_+$}
\end{equation*}
where we used that $c_n-T_n^{17}/\sqrt{n}\to 1$ as $n\to\infty$ due to assumption~\eqref{E:chn-def} and property of $c_n$~\eqref{E:cn-def}.
\end{proof}

\begin{lemma} \label{L:Sn-FN}
Let $\rho_Z$ be an \ri norm, $n\in\N$ and let $N$ be as in \eqref{E:N-def}.
Then, for any $f\in\MM(\R)$ and $T>0$,
\begin{equation*} 
	\left\| S_{n} (\fT) - c_n F_N (\fT) \right\|_{Z(-T,T)}
		\lesssim \frac{T^{17}}{\sqrt{n}}\, \| S f^* \|_{Z\dom}.
\end{equation*}
\end{lemma}

\begin{proof}
Let $x\in(-T,T)$.
Relation~\eqref{E:Sneq} yields the equality
\begin{align} \label{E:diff}
    S_{n} (\fT)(x) - c_n F_N(\fT)(x)
        =  c_{n}\sum_{k=2}^7\,\int_{-T}^{T} \frac{K_7^{(n)}(x,y)}{x-y} f(y)\,\d y
            + c_n I(x) + c_n II(x),
\end{align}
where $I$ and $II$ are given by \eqref{E:I-def} and \eqref{E:II-def}, respectively.
Using the estimate~\eqref{E:SE-summary-2} and Lemma~\ref{L:HleS}, we get
\begin{align} \label{E:Sum-K-bound}
	\begin{split}
        \sum_{k=2}^7\,
            \biggl\| \int_{-T}^{T} \frac{K_7^{(n)}(x,y)}{x-y} f(y)\,\d y \biggr\|_{Z(-T,T)}
    		  &\lesssim \sum_{j=1}^6 \widetilde\gamma_j \bigl\| H (m_j \fT) \bigr\|_{Z(-T,T)}
                \\
              &\lesssim \biggl(\sum_{j=1}^6 \widetilde\gamma_j \|m_j\|_{L^\infty(-T,T)}\biggr) 
                \| S f^* \|_{Z\dom}
            \lesssim \frac{T^{17}}{\sqrt{n}}\, \|Sf^*\|_{Z\dom},
        \quad 
	\end{split}
\end{align}
where the last inequality is due to~\eqref{E:sums-gm}.

Next, we estimate the term $I$.
Using trigonometric identities, we infer that with $x\in(-T,T)$,
\begin{align*}
	I(x) & = \cos(Nx) \sin\Bigl(\frac{x}{2N}\Bigr) \int_{-T}^{T}
				\cos(Ny) \cos\Bigl(\frac{y}{2N}\Bigr) \frac{f(y)}{x-y}\,\d y
				\\
		&\quad - \sin(Nx) \sin\Bigl(\frac{x}{2N}\Bigr) \int_{-T}^{T}
				\sin(Ny) \cos\Bigl(\frac{y}{2N}\Bigr) \frac{f(y)}{x-y}\,\d y
				\\
		&\quad - \cos(Nx) \cos\Bigl(\frac{x}{2N}\Bigr) \int_{-T}^{T}
				\cos(Ny) \sin\Bigl(\frac{y}{2N}\Bigr) \frac{f(y)}{x-y}\,\d y
				\\
		&\quad + \sin(Nx) \cos\Bigl(\frac{x}{2N}\Bigr) \int_{-T}^{T}
				\sin(Ny) \sin\Bigl(\frac{y}{2N}\Bigr) \frac{f(y)}{x-y}\,\d y,
\end{align*}
which can be rewritten as
\begin{equation*}
	I(x) = \sum_{k=1}^4 w_k(x) H(u_k \fT)(x),
\end{equation*}
in which
$\|w_k\|_{L^\infty(-T,T)}\le T/2N$ 
and 
$\|u_k\|_{L^\infty(-T,T)}\le 1$
for $k=1,2$.
Also
$\|w_k\|_{L^\infty(-T,T)}\le 1$
and
$\|u_k\|_{L^\infty(-T,T)}\le T/2N$
for $k=3,4$.
Using Lemma~\ref{L:HleS} again, we conclude that
\begin{align}\label{E:I-bound}
    \begin{split}
	\|I\|_{Z(-T,T)}
        & \le \sum_{k=1}^4 \|w_k\|_{L^\infty(-T,T)} \|H(u_k \fT)\|_{Z(-T,T)}
            \\
        & \lesssim \biggl(\sum_{k=1}^4 \|w_k\|_{L^\infty(-T,T)}\|u_k\|_{L^\infty(-T,T)}\biggr)
            \|Sf^*\|_{Z\dom}
          \lesssim \frac{T}{\sqrt{n}} \|Sf^*\|_{Z\dom}.
    \end{split}
\end{align}
The term $II$ can be treated analogously to obtain
\begin{equation} \label{E:II-bound}
	\|II\|_{Z(-T,T)} \lesssim \frac{T^2}{n} \|Sf^*\|_{Z\dom}.
\end{equation}
Combining all the estimates \eqref{E:Sum-K-bound}, \eqref{E:I-bound} and \eqref{E:II-bound} into \eqref{E:diff},
we obtain
\begin{equation*}
     \| S_{n} (\fT) - c_n F_N(\fT) \|_{Z(-T,T)}
        \lesssim \biggl(\frac{T^{17}}{\sqrt{n}} + \frac{T}{\sqrt{n}} + \frac{T^2}{n} \biggr)
            \|Sf^*\|_{Z\dom}
        \lesssim \frac{T^{17}}{\sqrt{n}} \|Sf^*\|_{Z\dom}.
        \qedhere
\end{equation*}
\end{proof}

\section{Density results} \label{S:density}

\noindent
We recall a few notions related to \ri spaces first.
Let $\rho_X$ be an \ri norm. Its fundamental function
$\varphi_X$ is given for $t\ge 0$ by
\begin{equation*}
	\varphi_X(t) = \rho_X(\chi_{E_t}),
\end{equation*}
in which $E_t\subset\RR$ is any measurable set obeying $|E_t|=t$.
The fundamental function is nondecreasing and absolutely continuous except
perhaps at the origin.

A function $f$ in an \ri space $X(\RR)$ is said to have an absolutely continuous norm
if $\|f\|_{X(E_n)}\to 0$ for every sequence $\{E_n\}$ of measurable sets in $\RR$
such that $E_n\downarrow \emptyset$.
The set of all functions in $X(\RR)$ having an absolutely continuous norm is denoted by $X_a(\RR)$.

Any finite sum $\sum a_n \chi_{E_n}$, in which $a_n\in\R$ and $E_n$ is a measurable subset of $\RR$ of a finite measure is called a simple function.
The simple functions belong to any \ri space.  The closure of the
set of simple functions in an \ri space $X(\RR)$ is denoted by $X_b(\RR)$.

A connection between the sets $X_a(\R)$, $X_b(\R)$
and the closure of $\DD(\R)$ is the subject of the next result.
Recall that $\DD(\R)$ denotes the set of infinitely differentiable and
compactly supported functions in $\R$.

\begin{lemma} \label{L:Dclosure}
Let $\rho_X$ be an \ri norm.
The following statements are equivalent:
\begin{enumerate}
\item\label{en:C}
$X_b(\R)$ is the closure of the continuous functions with compact support;
\item\label{en:D}
 $X_b(\R)$ is the closure of $\DD(\R)$;
\item\label{en:lim}
$\lim_{t\to 0+} \varphi_X(t)=0$;
\item\label{en:AB}
$X_a(\R) = X_b(\R)$.
\end{enumerate}
\end{lemma}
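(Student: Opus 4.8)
The plan is to treat the four assertions as variations on a single theme and to route every implication through \ref{en:lim}. First I would record two facts valid for \emph{any} \ri norm that collapse \ref{en:C} and \ref{en:D} into one statement. On the one hand, every $g$ in the space $C_c(\R)$ of continuous, compactly supported functions lies in $X_b(\R)$: approximating $g$ uniformly by simple functions supported in a fixed compact set $K$ and using $\|g-s\|_{X(\R)}\le\|g-s\|_{L^\infty}\varphi_X(|K|)$ together with $\varphi_X(|K|)<\infty$ (property~(P4)) shows $g\in X_b(\R)$. On the other hand, mollification carries $C_c(\R)$ into $\DD(\R)$, and the mollifications converge to $g$ uniformly with supports in a common compact set, hence in $X(\R)$ by the same estimate; thus the closure of $C_c(\R)$ equals the closure of $\DD(\R)$. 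Consequently \ref{en:C} and \ref{en:D} are literally the same statement, and since $\overline{\DD(\R)}\subseteq X_b(\R)$ always, each is equivalent to $X_b(\R)\subseteq\overline{\DD(\R)}$, which, simple functions being dense in $X_b(\R)$, reduces to showing $\chi_E\in\overline{\DD(\R)}$ for every $E$ of finite measure.

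For \ref{en:lim}$\Rightarrow$\ref{en:C} I would approximate a fixed $\chi_E$ with $|E|<\infty$. Truncating, $\|\chi_E-\chi_{E\cap(-R,R)}\|_{X(\R)}=\varphi_X(|E\setminus(-R,R)|)\to0$ by \ref{en:lim}, so I may assume $E$ bounded. Inner and outer regularity of Lebesgue measure then furnish a compact $K$ and a bounded open $U$ with $K\subseteq E\subseteq U$ and $|U\setminus K|$ arbitrarily small, and Urysohn's lemma yields $g\in C_c(\R)$ with $\chi_K\le g\le\chi_U$, whence $|\chi_E-g|\le\chi_{U\setminus K}$ and $\|\chi_E-g\|_{X(\R)}\le\varphi_X(|U\setminus K|)$, small by \ref{en:lim}. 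The equivalence \ref{en:lim}$\Leftrightarrow$\ref{en:AB} I would obtain from the standard facts that $X_a(\R)\subseteq X_b(\R)$ and that $X_a(\R)$ is a closed subspace \citep{Ben:88}: under \ref{en:lim}, for any finite-measure $E$ and any $E_n$ with $\chi_{E_n}\to0$ \ae, dominated convergence gives $|E\cap E_n|\to0$, so $\|\chi_E\chi_{E_n}\|_{X(\R)}=\varphi_X(|E\cap E_n|)\to0$; thus every simple function has absolutely continuous norm, and $X_a(\R)$ being closed forces $X_b(\R)\subseteq X_a(\R)$, giving \ref{en:AB}. Conversely \ref{en:AB} places $\chi_{(0,1)}$ in $X_a(\R)$, and testing against $E_n=(0,1/n)$ yields $\varphi_X(1/n)\to0$, that is, \ref{en:lim}.

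The step I expect to be the crux is \ref{en:C}$\Rightarrow$\ref{en:lim}, which I would prove by contraposition, the point being that a unit jump cannot be resolved by a continuous function. Assuming $\varphi_X(0+)=c>0$, I will show $\chi_{(0,1)}\notin\overline{C_c(\R)}$, which contradicts \ref{en:C} since $\chi_{(0,1)}\in X_b(\R)$. Fix any $g\in C_c(\R)$ and, by uniform continuity, a $\delta>0$ with $|g(x)-g(0)|<\tfrac14$ for $|x|<\delta$. Because $|g(0)|+|1-g(0)|\ge1$, at least one of $|g(0)|$ and $|1-g(0)|$ is $\ge\tfrac12$, so on one of the intervals $(-\delta,0)$ or $(0,\delta)$ the quantity $|\chi_{(0,1)}-g|$ exceeds $\tfrac14$ throughout; as $\varphi_X$ is nondecreasing with $\varphi_X(\delta)\ge c$, this forces $\|\chi_{(0,1)}-g\|_{X(\R)}\ge\tfrac{c}{4}$. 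The bound is uniform in $g$, so $\chi_{(0,1)}$ keeps a positive distance from $C_c(\R)$, completing the contraposition. Assembling these implications closes the cycle \ref{en:C}$\Leftrightarrow$\ref{en:D}$\Leftrightarrow$\ref{en:lim}$\Leftrightarrow$\ref{en:AB}.
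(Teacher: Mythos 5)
Your proof is correct, and it takes a genuinely different route from the paper's. The paper cites Bennett--Sharpley for \ref{en:C}$\Leftrightarrow$\ref{en:lim} \citep[Chapter~3, Lemma~6.3]{Ben:88} and for \ref{en:lim}$\Leftrightarrow$\ref{en:AB} \citep[Chapter~2, Theorem~5.5]{Ben:88}, observes that \ref{en:D}$\Rightarrow$\ref{en:C} is immediate, and does real work only for \ref{en:AB}$\Rightarrow$\ref{en:D}: it mollifies a simple function and invokes the dominated-convergence property of functions with absolutely continuous norm \citep[Chapter~1, Proposition~3.6]{Ben:88}, so that mollification step genuinely uses hypothesis \ref{en:AB}. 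You instead make \ref{en:lim} the hub and argue from first principles, importing only the structural facts that $X_a(\R)$ is closed and $X_a(\R)\subseteq X_b(\R)$: you identify \ref{en:C} with \ref{en:D} \emph{unconditionally} by mollifying continuous functions, where uniform convergence on a common compact set plus the elementary estimate $\|h\|_{X(\R)}\le\|h\|_{L^\infty}\varphi_X(|K|)$ suffices; you prove \ref{en:lim}$\Rightarrow$\ref{en:C} via truncation, regularity of Lebesgue measure and Urysohn; \ref{en:C}$\Rightarrow$\ref{en:lim} by the contrapositive argument that a unit jump keeps distance at least $\varphi_X(0+)/4$ from $C_c(\R)$; and \ref{en:lim}$\Leftrightarrow$\ref{en:AB} by dominated convergence applied to $|E\cap E_n|$. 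The paper's proof buys brevity by outsourcing two equivalences to the standard reference; yours buys self-containedness, the slightly stronger unconditional identity $\overline{C_c(\R)}=\overline{\DD(\R)}$ valid for every \ri norm, and it sidesteps the delicate domination needed when mollifying a \emph{discontinuous} simple function --- indeed the paper's pointwise claim $|f*\psi_\delta|\le|f|$ can fail, since convolution enlarges supports, whereas your route never needs such a bound because mollifications of continuous functions converge uniformly. Only two trivial touch-ups are worth recording: in the contrapositive argument take $\delta\le 1$ so that $(0,\delta)\subseteq(0,1)$, and in reducing \ref{en:C} to characteristic functions note explicitly that $\overline{\DD(\R)}$ is a closed linear subspace, so it suffices to capture each $\chi_E$ with $|E|<\infty$.
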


\begin{proof}
The equivalence of \ref{en:C} and \ref{en:lim} is proved in \citep[Chapter~3,
Lemma~6.3]{Ben:88} and the equivalence of \ref{en:lim} and \ref{en:AB} is shown
in \citep[Chapter~2, Theorem~5.5]{Ben:88}.  Clearly \ref{en:D} implies
\ref{en:C}.

We now prove that \ref{en:AB} implies \ref{en:D}.
It suffices to show that each
simple function $f$ can be approximated in $X$ by an infinitely differentiable
compactly supported function.
To this end, let $\psi_\delta$ be a mollification family, \ie let $\psi_1$ be a smooth function supported in $(-1,1)$ satisfying $0\le \psi_1\le 1$, $\int \psi_1 =1$ and set $\psi_\delta(t) = \tfrac 1\delta\psi(\delta t)$ for $\delta>0$.
Let us then define $f^\delta = f *\psi_\delta$, the mollification of $f$.
Clearly, $f^\delta$ is smooth, compactly supported and $|f^\delta|\le |f|$.
Furthermore $f^\delta(x)\to f(x)$ for \ae $x\in\R$.  Since $f\in X_a(\R)$, Proposition~3.6 in Chapter~1 of \citep{Ben:88} yields $\|f^\delta-f\|_{X(\RR)}\to 0$ as $\delta\to 0^+$.
\end{proof}

Given $a>0$, define the dilation operator, $D_a$, at $f\in\MM(\RR)$ by
\begin{equation} \label{E:Da-def}
	D_af(t) = f\bigl(\tfrac ta\bigr)
		\quad\text{for $t>0$}.
\end{equation}
The operator $D_a$ is bounded on every \ri space, that is,
\begin{equation} \label{E:Da-XX}
	\|D_a f\|_{X(\RR)} \le C \|f\|_{X(\RR)}
		\quad\text{for every $f\in X(\RR)$},
\end{equation}
where $C\le\max\{1,a\}$; see \citep[Chapter~3, Proposition~5.11]{Ben:88}.
We will make use of this fact in the subsequent proofs.

Let us now introduce an important condition on a given \ri norm $\rho_Y$.
Namely,
\begin{equation} \label{E:logY}
	\eta\in Y\dom,
	\quad\text{where}\quad
		\eta(t) = \log\left( 1+\tfrac1t \right)
		\quad\text{for $t>0$}.
\end{equation}
Observe that $\eta$ may be equivalently replaced
by the function
$(1-\log t)\chi_{(0,1)}(t) + \tfrac{1}{t}\chi_{(1,\infty)}(t)$.
We will later see that condition \eqref{E:logY}
is a generalization of condition \eqref{E:logB} to the class of \ri spaces.

\begin{lemma} \label{L:FN-f}
Let $\rho_Y$ be an \ri norm satisfying \eqref{E:logY}. Then,
\begin{equation} \label{E:FN-f}
	\lim_{N\to\infty} \|F_N f - f\|_{Y(\R)} = 0
	\quad\text{for every $f\in\DD(\R)$}.
\end{equation}
\end{lemma}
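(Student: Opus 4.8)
The plan is to fix $f\in\DD(\R)$, choose $a$ with $\operatorname{supp} f\subseteq(-a,a)$, and split the line into the bounded core $[-2a,2a]$ and the tail $\{|x|>2a\}$, estimating $F_Nf-f$ separately on each and recombining by the triangle inequality (P1). On the tail $f$ vanishes, so there $F_Nf-f=F_Nf$, and the whole purpose of hypothesis \eqref{E:logY} will be to make this slowly decaying tail small in $Y(\R)$.

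For the tail I would first extract an extra power of $N$. Since $\sin(N(x-y))=\tfrac1N\tfrac{\d}{\d y}\cos(N(x-y))$ and $f$ together with its derivatives vanishes near $\pm a$, one integration by parts gives
\begin{equation*}
 F_Nf(x)=-\frac1N\int_{-a}^{a}\Bigl(\frac{f'(y)}{x-y}+\frac{f(y)}{(x-y)^2}\Bigr)\cos\bigl(N(x-y)\bigr)\,\d y.
\end{equation*}
For $|x|>2a$ one has $|x-y|\ge|x|/2$ on the support of $f$, whence $|F_Nf(x)|\le C_f/(N|x|)$ with $C_f$ depending only on $f$, so that
\begin{equation*}
 |(F_Nf-f)(x)|\le \frac{C_f}{N}\,\frac{1}{|x|}\chi_{\{|x|>2a\}}(x).
\end{equation*}
A direct computation shows that the non-increasing rearrangement of $\tfrac1{|x|}\chi_{\{|x|>2a\}}$ equals $2/(t+4a)$, which is dominated by a constant multiple of $\eta(t)=\log(1+\tfrac1t)$ for all $t>0$; hence $\tfrac1{|x|}\chi_{\{|x|>2a\}}\in Y(\R)$ precisely by \eqref{E:logY}. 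Using monotonicity (P2) and homogeneity (P1) of $\rho_Y$, one gets $\|(F_Nf-f)\chi_{\{|x|>2a\}}\|_{Y(\R)}\le \tfrac{C_f}{N}\bigl\|\tfrac1{|x|}\chi_{\{|x|>2a\}}\bigr\|_{Y(\R)}\to0$.

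For the bounded core I would invoke the classical uniform convergence of the Dirichlet means: for $f\in\DD(\R)$ one has $F_Nf\to f$ uniformly on compacts, which follows by writing $F_Nf(x)-f(x)=\int_\R\tfrac{\sin(Nt)}{t}\bigl(f(x-t)-f(x)\bigr)\,\d t$ (with the normalization fixed above), splitting at $|t|=\delta$, bounding the small part via $|f(x-t)-f(x)|\le\|f'\|_\infty|t|$ and the rest by a Riemann--Lebesgue argument that is uniform in $x$ because the relevant family is equicontinuous in $L^1$. Since characteristic functions of finite-measure sets lie in every \ri space, the elementary bound $\|g\chi_E\|_{Y(\R)}\le\|g\|_{L^\infty(E)}\varphi_Y(|E|)$ then yields $\|(F_Nf-f)\chi_{[-2a,2a]}\|_{Y(\R)}\le\|F_Nf-f\|_{L^\infty(-2a,2a)}\,\varphi_Y(4a)\to0$, and adding the two pieces finishes the proof.

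The main obstacle is the tail: $F_Nf$ decays only like $1/|x|$, so its rearrangement is of order $1/t$, which is exactly the borderline rate encoded by $\eta$; the assumption \eqref{E:logY} is therefore unavoidable, and the integration by parts producing the extra factor $1/N$ is what makes the tail norm tend to zero rather than merely stay bounded. Everything on the core is then soft, resting only on uniform Dirichlet convergence and the $L^\infty$-to-$Y$ estimate.
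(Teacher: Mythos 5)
Your proof is correct and takes essentially the same route as the paper: split $\R$ at a fixed dilate of the support, gain a factor $1/N$ on the tail by a single integration by parts and control the resulting $O\bigl(1/(N|x|)\bigr)$ tail in $Y(\R)$ via \eqref{E:logY}, and handle the core by uniform Dirichlet convergence combined with the $L^\infty$-to-$Y$ estimate $\|g\chi_E\|_{Y(\R)}\le\|g\|_{L^\infty(E)}\varphi_Y(|E|)$. The only cosmetic difference is that you verify membership of the tail majorant in $Y(\R)$ by computing its rearrangement and dominating it by $\eta$, whereas the paper keeps a logarithmic pointwise bound and invokes boundedness of the dilation operator $D_a$.
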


\begin{proof}
Let $f$ be given and assume that it vanishes outside of $(-T,T)$ for some $T>0$.
Set $R=2T+1$.
We have
\begin{equation} \label{E:jun1}
	\| F_Nf - f \|_{Y(\R)}
		\le \| F_Nf - f \|_{Y([-R,R])}
			+ \|F_N f\|_{Y(\R\setminus[-R,R])}.
\end{equation}
Now, $F_Nf\to f$ uniformly on $[-R,R]$ as $N\to\infty$ and therefore
the first term of \eqref{E:jun1} goes to zero as $R\to\infty$.
Let us focus on the second term of \eqref{E:jun1}.
Since we take the norm outside of $[-R,R]$,
we assume that $|x|> R$.
Changing variables, then integrating by parts, we get
\begin{align}
\begin{split}
\label{E:dec8}
	F_Nf(x)
		& = \int_{x-T}^{x+T} \sin(Ny) \frac{f(x-y)}{y}\,\d y
			\\
		& = \frac1{N} \int_{x-T}^{x+T}
			\cos(Ny) \frac{\d}{\d y}\biggl(\frac{f(x-y)}{y}\biggr)\d y.
\end{split}
\end{align}
Since $f$ is smooth, there is a constant $C>0$ such that
\begin{equation*}
	\frac{\d}{\d y}\biggl(\frac{f(x-y)}{y}\biggr)
		\le \frac{\|f'\|_{L^\infty(\R)}}{|y|} + \frac{\|f\|_{L^\infty(\R)}}{y^2}
		\le \frac{C}{|y|},
\end{equation*}
where we used the fact that, since $|x|\ge 2T+1$,
$y\in(x-T,x+T)$ implies that $|y|\ge 1$.
Therefore, \eqref{E:dec8} yields
\begin{equation*}
	|F_Nf(x)|
		\le \frac{C}{N} \int_{x-T}^{x+T} \frac{\d y}{|y|}
		= \frac{C}{N} \log\biggl(1+\frac{2T}{|x-T|}\biggr),
  \quad |x|\ge 2T+1.
\end{equation*}
Finally,
\begin{align*}
	\|F_N f\|_{Y(\R\setminus[-R,R])}
	& \le \frac{C}{N}
			\left\|
				\log\left(1+\tfrac{2T}{|x-T|}\right)
			\right\|_{Y(\R\setminus[-R,R])}
		= \frac{2C}{N}
			\left\|
				\log\left(1+\tfrac{2T}{x-T}\right)
			\right\|_{Y(2T+1,\infty)}
		\\
	& = \frac{2C}{N}
			\left\|
				D_{2T}\left(\log\left(1+\tfrac{1}{x}\right)\right)
			\right\|_{Y(T+1,\infty)}
	\le \frac{4TC}{N}
			\left\|
				\log\left(1+\tfrac{1}{x}\right)
			\right\|_{Y\dom},
\end{align*}
where the last inequality is due to the boundedness of the dilation operator
\eqref{E:Da-def} on $Y$.
Since the last norm is finite by assumption \eqref{E:logY},
we conclude that
$\|F_N f\|_{Y(\R\setminus[-R,R])}\to 0$ as $N\to\infty$.
\end{proof}

\begin{lemma} \label{L:Snf-f}
Let $\rho_Y$ be an \ri norm satisfying \eqref{E:logY}
and let $\chn$ be as in \eqref{E:chn-def}.
Then,
\begin{equation*}
	\lim_{n\to\infty} \|\chn S_n f - f\|_{Y(\R)} = 0
	\quad\text{for every $f\in\DD(\R)$}.
\end{equation*}
\end{lemma}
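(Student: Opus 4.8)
The plan is to compare $S_n f$ with the Dirichlet operator $F_N f$, where $N=N(n)$ is given by \eqref{E:N-def}, and then $F_N f$ with $f$, using the two approximation lemmas already at hand. Since $f\in\DD(\R)$ is compactly supported and $T_n\to\infty$, for all sufficiently large $n$ the support of $f$ is contained in $(-T_n,T_n)$, so that $\chn f=f$ and $S_n f=S_n(\fT)$ with $T=T_n$. For such $n$ I would use the telescoping decomposition
\[
	\chn S_n f - f
		= \chn\bigl(S_n f - c_n F_N f\bigr)
		+ c_n\,\chn\bigl(F_N f - f\bigr)
		+ (c_n-1)f,
\]
which follows from $\chn f=f$, and then estimate the three summands separately in $\rho_Y$.

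Two of the summands are easy. The last has $\rho_Y$-norm at most $|c_n-1|\,\|f\|_{Y(\R)}$, which tends to $0$ because $c_n\to 1$ by \eqref{E:cn-def} and $\|f\|_{Y(\R)}<\infty$ for the bounded, compactly supported $f$. The middle one is controlled, by monotonicity of $\rho_Y$ (property (P2)) together with the boundedness of the sequence $c_n$, by a fixed multiple of $\|F_N f - f\|_{Y(\R)}$; since $N=N(n)\to\infty$ as $n\to\infty$, this vanishes by Lemma~\ref{L:FN-f}, which is precisely where assumption \eqref{E:logY} is consumed.

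The principal summand is $\|\chn(S_n f - c_n F_N f)\|_{Y(\R)}=\|S_n f - c_n F_N f\|_{Y(-T_n,T_n)}$. Here I would insert the pointwise bound of Lemma~\ref{L:S-FleH} (with $T=T_n$), take the $Y(-T_n,T_n)$-norm, pull the $L^\infty(-T_n,T_n)$-norms of the multipliers $w_k$ out by (P2), and apply Lemma~\ref{L:HleS} to each factor $\|H(u_k f)\|_{Y(-T_n,T_n)}$. Collecting the products $\|w_k\|_{L^\infty(-T_n,T_n)}\|u_k\|_{L^\infty(-T_n,T_n)}$ into $\beta(T_n,n)$ yields
\[
	\bigl\|\chn(S_n f - c_n F_N f)\bigr\|_{Y(\R)}
		\lesssim \beta(T_n,n)\,\|Sf^*\|_{Y(0,\infty)}
		\lesssim \frac{T_n^{17}}{\sqrt{n}}\,\|Sf^*\|_{Y(0,\infty)},
\]
the final step being \eqref{E:beta-def}. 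The factor $T_n^{17}/\sqrt n\to 0$ is exactly what the calibration $T_n=o(n^{1/34})$ in \eqref{E:chn-def} delivers, since then $T_n^{17}=o(n^{1/2})$.

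The real crux is therefore to verify that $\|Sf^*\|_{Y(0,\infty)}<\infty$. Since $f^*\le\|f\|_{L^\infty(\R)}$ and $f^*$ is supported in $(0,L)$ with $L=|\{f\neq 0\}|$, a direct computation gives $Sf^*(t)\le\|f\|_{L^\infty(\R)}\log\bigl(1+\tfrac{L}{t}\bigr)=\|f\|_{L^\infty(\R)}\,D_L\eta(t)$, with $\eta$ as in \eqref{E:logY}. The boundedness of the dilation operator \eqref{E:Da-XX} together with assumption \eqref{E:logY} then forces $\|Sf^*\|_{Y(0,\infty)}<\infty$; this is the precise point at which \eqref{E:logY} appears as the rearrangement-invariant incarnation of the logarithmic growth of the Stieltjes transform of a bounded, compactly supported profile. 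Combining the three estimates completes the proof.
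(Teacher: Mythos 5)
Your proof is correct and follows essentially the same route as the paper: truncate to $(-T_n,T_n)$, compare $S_n f$ with $c_nF_Nf$ via Lemma~\ref{L:S-FleH} and Lemma~\ref{L:HleS}, control $\|Sf^*\|_{Y(0,\infty)}$ by the logarithmic bound plus dilation boundedness \eqref{E:Da-XX} and assumption \eqref{E:logY}, and finish with Lemma~\ref{L:FN-f} and $c_n\to 1$. The one genuine (small) difference is the arrangement of your telescoping: you let the factor $(c_n-1)$ multiply $f$ itself, so that term is handled by the trivial bound $|c_n-1|\,\|f\|_{Y(\R)}$, whereas the paper's decomposition puts $(c_n-1)$ against $F_Nf$ and therefore needs the auxiliary claim \eqref{E:FleS}, namely $\|F_Nf\|_{Y(\R)}\lesssim\|Sf^*\|_{Y(0,\infty)}$, proved by splitting $F_N$ into two Hilbert-transform pieces. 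Your rearrangement makes that extra step unnecessary, which is a modest but real simplification; everything else in your argument matches the paper's proof step for step.
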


\begin{proof}
Let $f\in\DD(\R)$.
Since it is compactly supported, we may, without loss of generality, assume that its support is contained in $(-T_{1}, T_{1})$.
We have
\begin{align} \label{E:dec9}
	\begin{split}
	\|\chn S_n f - f\|_{Y(\R)}
		\le \|S_n f - c_n F_Nf \|_{Y(-T_n,T_n)}
			+ |c_n-1| \| F_Nf \|_{Y(\R)}
			+ \|F_Nf-f\|_{Y(\R)},
		\end{split}
\end{align}
where $N$ and $c_n$ are defined by \eqref{E:N-def} and \eqref{E:cn-def},
respectively.

An application of Lemma~\ref{L:Sn-FN} shows that
\begin{equation} \label{E:jun2}
	\|S_n f - c_n F_Nf \|_{Y(-T_n,T_n)}
		\lesssim \frac{T_n^{17}}{\sqrt{n}} \|Sf^*\|_{Y\dom}.
\end{equation}
Next, since $f^*$ is bounded and supported in $(0,T_{1})$
\begin{equation} \label{E:dec10}
	Sf^*(t)
		= \int_{0}^{T_{n_1}} \frac{f^*(s)}{s+t}\,\d s
		\le C \log\biggl(1+\frac{T_{1}}{t}\biggr)
\end{equation}
with $C=\|f\|_{L^\infty(\R)}$.
Therefore, as the dilation operator \eqref{E:Da-def} is bounded, 
\begin{equation*}
	\|Sf^*\|_{Y\dom}
		\le C \left\|
						D_{T_{1}}\!\left(\log\left(1+\tfrac{1}{t}\right)\right)
					\right\|_{Y\dom}
		\le CT_{1} \left\|
						\log\left(1+\tfrac{1}{t}\right)
					\right\|_{Y\dom},
\end{equation*}
and consequently, thanks to assumption \eqref{E:logY},
\begin{equation} \label{E:Sf-finite}
	\|Sf^*\|_{Y\dom} < \infty.
\end{equation}
Since $T_n^{17}/\sqrt{n}\to 0$ as $n\to\infty$ due to the choice of $T_n$ in
\eqref{E:chn-def}, inequality \eqref{E:jun2} ensures that the first term of
\eqref{E:dec9} vanishes as $n\to\infty$.

As for the second term of \eqref{E:dec9}, we claim that
\begin{equation} \label{E:FleS}
	\|F_N f\|_{Z(\R)}
		\lesssim \|Sf^*\|_{Z\dom}.
\end{equation}
Indeed, by definition, we have
\begin{align*}
	F_N f(x)
		& = \int_{\R} \frac{\sin\bigl(N(x-y)\bigr)}{x-y} f(y)\,\d y
			\\
		& = \sin(Nx) \int_{\R} \cos(Ny)\frac{f(y)}{x-y}\,\d y
				- \cos(Nx) \int_{\R} \sin(Ny)\frac{f(y)}{x-y}\,\d y
		\quad\text{for $x\in\R$},
\end{align*}
and thus
\begin{equation*}
	\|F_N f\|_{Y(\R)}
		\le \|H(m_1f)\|_{Y(\R)}
			+ \|H(m_2f)\|_{Y(\R)},
\end{equation*}
in which $\|m_j\|_{L^\infty(\R)} = 1$, $j=1,2$ and \eqref{E:FleS} follows by
Lemma~\ref{L:HleS}.
Inequality \eqref{E:FleS}, together with
\eqref{E:Sf-finite}, therefore yields
\begin{equation*}
	|c_n-1| \| F_Nf \|_{Y(\R)}
        \le |c_n-1| \|Sf^*\|_{Y\dom}
        \to 0,
\end{equation*}
since $c_n\to 1$ as $n\to\infty$.

Finally, the third term in~\eqref{E:dec9} tends to zero due to
Lemma~\ref{L:FN-f}.
\end{proof}

\section{Proof of Theorem~\ref{T:ri}}
\label{S:general}

\noindent
Before we present the proof, we need an auxiliary result asserting that the convergence problem reduces to the uniform boundedness of all the partial sum operators.

\begin{lemma} \label{L:UBP}
Let $\rho_X$ and $\rho_Y$ be \ri norms.
Suppose that $\varphi_X(0+)=0$, $X_b(\R) \subseteq Y(\R)$
and that \eqref{E:logY} holds for $Y\dom$.
Then,
\begin{equation}\label{en:converges}
	\lim_{n\to\infty} \|\chn S_n (f\chn) - f\|_{Y(\R)} = 0
		\quad\text{for every $f\in X_b(\R)$};
\end{equation}
if and only if
\begin{equation}\label{en:uniform}
	\sup_{n\in\N}\; \|\chn S_n (f\chn) \|_{Y(\R)} \lesssim \|f\|_{X(\R)}
	\quad\text{for every $f\in X_b(\R)$}.
\end{equation}
\end{lemma}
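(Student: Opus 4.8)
The plan is to read the equivalence as an instance of the uniform boundedness principle combined with a density argument, where the truncated partial sum operators $T_n f := \chn S_n(f\chn)$ are viewed as linear maps between the Banach spaces $X_b(\R)$ and $Y(\R)$. Note first that $X_b(\R)$, being the closure of the simple functions inside the Banach function space $X(\R)$, is itself a Banach space, and $Y(\R)$ is Banach as well; moreover each $T_n f$ is a bounded, compactly supported function and hence lies in $Y(\R)$, so the maps $T_n\colon X_b(\R)\to Y(\R)$ are well defined.

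For the implication \ref{en:converges}$\Rightarrow$\ref{en:uniform} I would first verify that each individual $T_n$ is bounded. Since $S_n(f\chn)=\sum_{k=0}^n\bigl(\int_{-T_n}^{T_n} f h_k\bigr)h_k$, the operator $T_n$ has finite rank: its range is spanned by the fixed functions $\chn h_k$, each bounded with compact support and hence in $Y(\R)$ by properties (P2) and (P4). The coefficient functionals $f\mapsto\int_{-T_n}^{T_n}f h_k$ are bounded on $X_b(\R)$ because $h_k$ is bounded on the finite interval $(-T_n,T_n)$, so property (P5) gives $\bigl|\int_{-T_n}^{T_n}f h_k\bigr|\lesssim\|f\|_{X(\R)}$; boundedness of $T_n$ follows. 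Convergence in \ref{en:converges} makes $\{T_n f\}$ norm-convergent, hence $\sup_n\|T_n f\|_{Y(\R)}<\infty$ for each fixed $f\in X_b(\R)$. The Banach--Steinhaus theorem then upgrades this pointwise boundedness to a uniform operator-norm bound $\sup_n\|T_n\|_{X_b(\R)\to Y(\R)}<\infty$, which, since $\|f\|_{X_b(\R)}=\|f\|_{X(\R)}$, is precisely \ref{en:uniform}.

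For the reverse implication \ref{en:uniform}$\Rightarrow$\ref{en:converges} I would argue by density off the class $\DD(\R)$. The hypothesis $\varphi_X(0+)=0$ together with Lemma~\ref{L:Dclosure} gives that $\DD(\R)$ is dense in $X_b(\R)$, while for $g\in\DD(\R)$ one has $g\chn=g$ once $T_n$ exceeds the fixed support radius of $g$, so that $T_n g=\chn S_n g$ for all large $n$; Lemma~\ref{L:Snf-f} then yields $\|T_n g-g\|_{Y(\R)}\to 0$. I also need that the inclusion $X_b(\R)\subseteq Y(\R)$ is a \emph{continuous} embedding: this follows from the closed graph theorem, since convergence in either of the \ri norms forces a.e.\ convergence of a subsequence, so the graph of the inclusion is closed. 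Given $f\in X_b(\R)$ and $\varepsilon>0$, pick $g\in\DD(\R)$ with $\|f-g\|_{X(\R)}<\varepsilon$ and estimate
\begin{equation*}
	\|T_n f-f\|_{Y(\R)}
		\le \|T_n(f-g)\|_{Y(\R)}+\|T_n g-g\|_{Y(\R)}+\|g-f\|_{Y(\R)}.
\end{equation*}
The first term is at most $C\varepsilon$ by \ref{en:uniform}, the third at most $C'\varepsilon$ by the continuous embedding, and the middle term tends to zero by the previous step; taking $\limsup_{n\to\infty}$ and then letting $\varepsilon\to0^+$ gives \ref{en:converges}.

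The genuinely routine parts are the density and convergence facts supplied by Lemmas~\ref{L:Dclosure} and~\ref{L:Snf-f}; the points requiring care are the verifications feeding the abstract principles, namely that every $T_n$ is a bounded operator on $X_b(\R)$ (so that Banach--Steinhaus applies) and that the set inclusion $X_b(\R)\subseteq Y(\R)$ is automatically continuous (so that the third term above is controlled). I expect the closed-graph verification of the latter to be the main conceptual step, everything else being bookkeeping of the $\varepsilon/3$ splitting.
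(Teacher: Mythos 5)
Your proposal is correct and follows essentially the same route as the paper: Banach--Steinhaus for \ref{en:converges}$\Rightarrow$\ref{en:uniform}, and for the converse a density argument off $\DD(\R)$ via Lemmas~\ref{L:Dclosure} and~\ref{L:Snf-f}, the uniform bound, and the continuity of the embedding $X_b(\R)\subseteq Y(\R)$, exactly as in the paper's ``$M$ is closed'' $\varepsilon/3$ argument. The only difference is that you spell out two points the paper leaves implicit --- the finite-rank boundedness of each $T_n$ needed for Banach--Steinhaus, and the closed-graph justification that the set inclusion is a continuous embedding --- which is sound bookkeeping, not a different proof.
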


\begin{proof}
To show that \eqref{en:uniform} implies \eqref{en:converges}, set
\begin{equation*}
	M = \bigl\{f\in X_b(\R): \|\chn S_n (f\chn) -f \|_{Y(\R)} \to 0 \bigr\}.
\end{equation*}
Our aim is to show that $X_b\subseteq M$.
Lemma~\ref{L:Dclosure} asserts that, under the assumption $\varphi_X(0+)=0$, the space $X_b(\R)$ coincides with the closure of $\DD(\R)$ in $X(\R)$, proving that $\overline{\DD(\R)}^{X}\subseteq M$.
By Lemma~\ref{L:Snf-f}, we have $\DD(\R)\subset M$.
Therefore, it only remains to show that $M$ is closed in $X(\R)$.
To this end, suppose that $\{f_k\}$ is a sequence of functions in $M$ converging to $f$ in the norm of $X(\R)$.
Thanks to the embedding $X_b(\R)\subseteq Y(\R)$, we have
\begin{equation} \label{E:jun3}
	\| f - f_k \|_{Y(\R)} \lesssim \| f - f_k \|_{X(\R)}
\end{equation}
for all $k$.
Then, using \eqref{E:jun3} and \eqref{en:uniform},
\begin{align*}
	\|\chn S_n(f\chn) - f \|_{Y(\R)}
		& \le \bigl\| \chn S_n\bigl((f - f_k)\chn\bigr) \bigr\|_{Y(\R)}
			+ \| \chn S_n(f_k\chn) - f_k \|_{Y(\R)}
			+ \| f_k - f \|_{Y(\R)}
		\\
		& \lesssim \| f - f_k \|_{X(\R)}
			+ \| \chn S_n (f_k\chn) - f_k \|_{Y(\R)}.
\end{align*}
Given $\varepsilon>0$ take $k_0$ sufficiently large so that
$\| f_{k_0} - f\|_{X(\R)} < \varepsilon$.
To this $k_0$, we can associate a positive integer $n_0\in\N$ such that $\|\chn S_n(f_{k_0}\chn) - f_{k_0}\|_{Y(\R)}<\varepsilon$ for every $n\ge n_0$.
Therefore, $\|\chn S_n(f\chn) - f\|_{Y(\R)}\lesssim\varepsilon$
for $n\ge n_0$, proving that $f\in M$.

The converse follows immediately by the Uniform Boundedness Principle, since $X_b(\R)$ is a complete space as a closed subspace of the Banach space $X(\R)$.
\end{proof}

The next lemma requires the notion of an associate space.
With \ri norm $\rho$, the functional $\rho'\colon\MM^+(\RR)\to\R$ given by
\begin{equation*}
	\rho'(g) = \sup
		\left\{
			\int_{\RR} fg\,\d x:
				f\in\MM^+(\RR),\,\rho(f)\le 1
		\right\}
\end{equation*}
is also an \ri norm, called the K\"othe dual or the associate norm of $\rho$.
If $X(\RR)$ is the \ri space determined by an \ri norm $\rho_X$, then the \ri space $X'(\RR)$ determined by $\rho_X'$ is called the associate space of $X(\RR)$.
It holds that $(X')'(\RR)=X(\RR)$, see \citep[Chapter~1,Theorem~2.7]{Ben:88}.
Especially,
\begin{equation} \label{E:reflexivity}
	\rho(f) = \sup
		\left\{
			\int_{\RR} fg\,\d x:
				f\in\MM^+(\RR),\,\rho'(g)\le 1
		\right\}.
\end{equation}

\begin{lemma} \label{L:SXb}
Let $\rho_X$ and $\rho_Y$ be \ri norms and let $S$ be the Stieltjes transform.
Then,
\begin{equation} \label{E:Sri}
    \bigl\| S g \bigr\|_{Y\dom} \lesssim \|g\|_{X\dom}
        \quad\text{for every $g\in X\dom$},
\end{equation}
if and only if
\begin{equation} \label{E:Sri'}
	\bigl\| S g \bigr\|_{Y\dom} \lesssim \|g\|_{X\dom}
		\quad\text{for every $g\in X_b\dom$}.
\end{equation}
\end{lemma}

\begin{proof}
\let\dom=\relax
Clearly, \eqref{E:Sri} implies \eqref{E:Sri'}. Conversely, \eqref{E:Sri'} asserts
that there is $C>0$ such that
\begin{equation} \label{E:jun6}
	\sup\left\{ \|Sg\|_{Y\dom}:
				g\in X_b\dom,\, \|g\|_{X\dom}\le 1
			\right\}
		\le C.
\end{equation}
Note that all the spaces in this paragraph are taken over $\R_+$.
Next, by~\eqref{E:reflexivity},
\begin{equation} \label{E:jun7}
	\|Sg\|_{Y\dom}
		= \sup\left\{ \int_{0}^\infty Sg(x)f(x)\,\d x :
						f\in Y',\, \|f\|_{Y'} \le 1
					\right\},
\end{equation}
where $Y'$ denotes the associate space of $Y$.
Clearly,
\begin{equation} \label{E:S-selfadj}
	\int_{0}^\infty Sg(x)f(x)\,\d x
		= \int_{0}^\infty g(x)Sf(x)\,\d x
\end{equation}
due to Fubini's theorem.
Using \eqref{E:S-selfadj} and \eqref{E:jun7}, \eqref{E:jun6} is equivalent
to
\begin{equation} \label{E:jun8}
	\sup\left\{ 
				\int_{0}^\infty g(x)Sf(x)\,\d x:
					g\in X_b,\,
					f\in Y',\,
					\|g\|_{X}\le 1,\,
					\|f\|_{Y'}\le 1
			\right\}
		\le C.
\end{equation}
The result of \citep[Chapter~1, Theorem~3.12]{Ben:88}
asserts that the supremum over $X_b$ coincides with the one over $X$, that is
\begin{equation*}
	\sup\left\{ 
				\int_{0}^\infty g(x)h(x)\,\d x:
					g\in X_b,\,
					\|g\|_{X}\le 1,\,
			\right\}
	=
	\sup\left\{ 
				\int_{0}^\infty g(x)h(x)\,\d x:
					g\in X,\,
					\|g\|_{X}\le 1,\,
			\right\}
\end{equation*}
for any eligible $h$. In conclusion, we may replace $X_b$ by $X$ in inequality
\eqref{E:jun8}. We use \eqref{E:S-selfadj} and \eqref{E:jun7} again, to arrive back
to \eqref{E:jun6} with $X_b$ replaced by $X$ which is nothing but \eqref{E:Sri}.
\end{proof}

\begin{proof}[Proof of Theorem~\ref{T:ri}]
Lemma~\ref{L:UBP} asserts that \ref{en:convergence-ri} is equivalent to
the following statement.
\begin{enumerate}[label={\rm(\roman*)'}]
    \item\label{en:uniform-ri}
    The norm $\rho_Y$ obeys \eqref{E:logY} and
    \begin{equation*}
	   \sup_{n\in\N}\; \|\chn S_n (f\chn) \|_{Y(\R)} \lesssim \|f\|_{X(\R)}
	   \quad\text{for every $f\in X_b(\R)$}.
    \end{equation*}
\end{enumerate}
Next, thanks to Lemma~\ref{L:SXb}, assertion \ref{en:S-ri} holds if and only if
\begin{enumerate}[label={\rm(\roman*)'}]
    \setcounter{enumi}{1}
    \item\label{en:S-ri'}
    \begin{equation*}
    	\bigl\| S g \bigr\|_{Y\dom} \lesssim \|g\|_{X\dom}
    		\quad\text{for every $g\in X_b\dom$}.
    \end{equation*}
\end{enumerate}

Now, it is, enough to prove the equivalence of \ref{en:uniform-ri} and \ref{en:S-ri'}.
Let us prove that \ref{en:uniform-ri} implies \ref{en:S-ri'} first.
Let $g\in X_b\dom$. We may assume that $g$ is nonnegative. Using Lemma~\ref{L:SleSn}
and the lower semi-continuity of \ri norms, we obtain
\begin{equation} \label{E:jun4}
	\| S g \|_{Y\dom}
		\lesssim \liminf_{n\to\infty}\sum_{k=1}^4 \sum_{m=0}^3
			\bigl\| \chn S_n (f_{k,m}\chn) \bigr\|_{Y(\R)},
\end{equation}
where the functions $f_{k,m}$ are given by \eqref{E:fkm-def}. By their definition,
\begin{equation} \label{E:jun5}
	\|f_{k,m}\|_{X(\R)} \le \|g\|_{X\dom}
		\quad\text{for all eligible $k$, $m$ and $n$.}
\end{equation}
Also $f_{k,m}\in X_b(\R)$. Therefore, we may apply the uniform bound from
\ref{en:uniform-ri}, and \eqref{E:jun4} with \eqref{E:jun5} yield
\begin{equation*}
	\| S g \|_{Y\dom}
		\lesssim \sup_{n\in\N}\sum_{k=1}^4 \sum_{m=0}^3
			\|f_{k,m}\|_{X(\R)}
		\lesssim \|g\|_{X\dom},
\end{equation*}
proving \ref{en:S-ri'}.

Conversely, assume that \ref{en:S-ri'} holds and let $f\in X_b(\R)$.
Lemma~\ref{L:SnleS} implies that
\begin{equation*}
	\|\chn S_n (f\chn) \|_{Y(\R)}
		\lesssim \biggl(1+\frac{T_n^{17}}{\sqrt{n}}\biggr) \|Sf^*\|_{Y\dom}
		\lesssim \|f^*\|_{X\dom}
		= \|f\|_{X(\R)},
\end{equation*}
where we used \ref{en:S-ri'} with $g=f^*\in X_b\dom$ together with the fact that $T_n^{17}/\sqrt{n}$ is bounded, by the choice of the sequence $T_n$ in~\eqref{E:chn-def}.
This proves the inequality in \ref{en:uniform-ri}.
It remains to show that $\rho_Y$ satisfies \eqref{E:logY}.
Let us set $g=\chi_{(0,1)}$.
Then $g\in X_b\dom$ and
\begin{equation*}
	Sg(t) = \int_{0}^{1} \frac{\d s}{s+t} = \log\left( 1+\tfrac1t \right)
		\quad\text{for $t\in\R_+$}
\end{equation*}
and \eqref{E:logY} follows from~\ref{en:S-ri'}, since $g\in X_b\dom$.
\end{proof}

\section{Orlicz spaces and proof of Theorem~\ref{T:main-Orlicz}}
\label{S:Orlicz}

\noindent
We first recall definitions and basic properties of Orlicz spaces, see \citep[Chapter~4, Section~8]{Ben:88} or \citep{Rao:91} for further reference.
Let $A$ be a Young function and let $\RR$ denote $\R$ or $\R_+$.
The Orlicz space $L^A(\RR)$ is defined by
\begin{equation*}
	L^A(\RR) =
	\left\{
		f\in\MM(\RR):
			\int_{\RR} A(c|f|) \le 1 \text{ for some $c>0$}
	\right\}
\end{equation*}
and equipped with Luxemburg norm
\begin{equation*}
	\|f\|_{L^A(\RR)}
		= \rho_A(|f|)
		= \inf
		\left\{
			\lambda>0:
				\int_{\RR} A(|f|/\lambda) \le 1
		\right\}.
\end{equation*}
The functional $\rho_A$ is an \ri Banach function norm.
Its fundamental function, denoted by $\varphi_A$, is given by
\begin{equation} \label{E:Orl-fundamental}
	\varphi_A(t) = \frac{1}{A^{-1}(1/t)}
		\quad\text{for $t>0$},
\end{equation}
where $A^{-1}$ stands for the generalised right-continuous inverse of $A$.
For Young functions $A$ and $B$, one has that $L^A(\RR)\subseteq L^B(\RR)$
if and only if there is $c>0$ such that $B(t)\le A(ct)$ for $t\ge 0$.
The space $E^A(\RR)$, defined by
\begin{equation*}
	E^A(\RR) =
	\left\{
		f\in\MM(\RR):
			\int_{\RR} A(c|f|)\,\d\mu <\infty \text{ for all $c>0$}
	\right\},
\end{equation*}
coincides with the subspace of functions in $L^A(\RR)$ having an
absolutely continuous norm that is
\begin{equation} \label{E:Orl-AC}
	L^A_a(\RR) = E^A(\RR);
\end{equation}
see \citep[Theorem~4.12.13]{Pic:13}.
With $f_n, f\in L^A(\RR)$,
$n\in\N$, one has
\begin{equation} \label{E:norm-convergence}
	\lim_{n\to\infty} \|f - f_n\|_{L^A(\RR)} = 0
\end{equation}
if and only if
\begin{equation} \label{E:modular-convergence}
	\lim_{n\to\infty} \int_{\RR} A\bigl(\lambda|f-f_n|\bigr)\,\d\mu = 0
		\quad\text{for all $\lambda>0$.}
\end{equation}
The proof that \eqref{E:norm-convergence} implies \eqref{E:modular-convergence} is given in~\citep[Section~3.4, Theorem~12]{Rao:91}.
Moreover, as remarked in \citep[Section~3.4, Remark, p.~87]{Rao:91},
Morse and Transue have shown the converse in their paper \citep{Mor:50}, where it appears as an observation necessary to a rather involved argument.

\begin{proof}[Proof of Theorem~\ref{T:main-Orlicz}]
Set $\rho_X=\rho_A$ and $\rho_Y=\rho_B$. Let us verify the
hypotheses of Theorem~\ref{T:ri}. By \eqref{E:Orl-fundamental},
$\varphi_A(s)=1/A^{-1}(1/s)$ for $s>0$. Therefore
$\varphi(0+)=0$ if and only if $A^{-1}(t)\to\infty$ as $t\to\infty$.
This happens if and only if $A$ is finite-valued.
Next, $L^A(\R)\subseteq L^B(\R)$ if and only if $B(t)\le A(ct)$
for $t\ge 0$ with global constant $c>0$. Finally, $\rho_B$
obeys \eqref{E:logY} if and only if \eqref{E:logB} holds, by the
very definition of the Luxemburg norm.

Theorem~\ref{T:ri} now asserts that \eqref{E:logB} holds and
\begin{equation} \label{E:Sn-LB}
	\lim_{n\to\infty} \|\chn S_n(f\chn) - f \|_{L^B(\R)}
		= 0
	\quad\text{for every $f\in L^A_b(\R)$}
\end{equation}
if and only if
\begin{equation} \label{E:S-LA-LB}
	\|Sg\|_{L^B\dom}
		\lesssim \|g\|_{L^A\dom}
	\quad\text{for every $f\in L^A\dom$}.
\end{equation}
First, \eqref{E:Sn-LB} is equivalent to \ref{en:truncated-convergence} since
Lemma~\ref{L:Dclosure} together with \eqref{E:Orl-AC} ensures that
\begin{equation} \label{E:D-Orl}
	L^A_b(\R)
	=
	\overline{\DD(\R)}^{L^A}
	=
	L^A_a(\R)
	=
	E^A(\R),
\end{equation}
and the rest is due to the equivalence of \eqref{E:norm-convergence} and \eqref{E:modular-convergence}.

It only remains to show that \eqref{E:S-LA-LB} is equivalent to \ref{en:AB-conditions}.
For $g\in\MM^+\dom$, define
\begin{equation*}
	Pg(t) = \frac{1}{t}\int_{0}^t g(s)\,\d s
	\quad\text{and}\quad
	Qg(t) = \int_{t}^\infty \frac{g(s)}{s}\,\d s
	\quad\text{for $t\in\R_+$}.
\end{equation*}
Then
\begin{equation*}
	Pg(t) + Qg(t)
		= \int_{0}^\infty \min\left\{ \tfrac{1}{s},\tfrac{1}{t} \right\}g(s)\,\d s
\end{equation*}
and, since
\begin{equation*}
	\frac{1}{s+t}
		\le \min\left\{ \tfrac{1}{s},\tfrac{1}{t} \right\}
		\le \frac{2}{s+t}
		\quad\text{for $s,t\in\R_+$},
\end{equation*}
we infer that $Sg\le Pg + Qg \le 2Sg$. Consequently, as $P$ and $Q$ are positive
operators, \eqref{E:S-LA-LB} holds if and only if for every $g\in L^A\dom$
\begin{equation} \label{E:P-LA-LB}
	\|Pg\|_{L^B\dom}
		\lesssim \|g\|_{L^A\dom}
\end{equation}
and
\begin{equation} \label{E:Q-LA-LB}
	\|Qg\|_{L^B\dom}
		\lesssim \|g\|_{L^A\dom}.
\end{equation}
Finally, the fact that~\eqref{E:P-LA-LB}
and \eqref{E:Q-LA-LB} are characterised by the first and
the second inequality of~\ref{en:AB-conditions}, respectively,
is well-known; see~\citep{Cia:14a,Cia:99a,Gal:88,Kit:97,Ker:19}, for instance.
\end{proof}

\paragraph{Acknowledgment}
We would like to extend our special thanks to Ron Kerman.
This project was initiated in 2015 by V.~Musil and R.~Kerman and, following a hiatus, was independently revisited by S.~Spektor and R.~Kerman.
The final manuscript is the result of a renewed collaboration between all three researchers in 2024--2025.
Although Ron Kerman chose not to be included as a co-author, his contribution to this project is indisputable, and we gratefully acknowledge his essential input.
We also wish to thank the referee for their careful reading of the paper and for their valuable comments.


\begin{thebibliography}{10}

\bibitem{Ask:65}
R.~Askey and S.~Wainger.
\newblock Mean convergence of expansions in {L}aguerre and {H}ermite series.
\newblock {\em Amer. J. Math.}, 87:695--708, 1965.

\bibitem{Ben:88}
C.~Bennett and R.~Sharpley.
\newblock {\em Interpolation of operators}, volume 129 of {\em Pure and Applied
  Mathematics}.
\newblock Academic Press, Inc., Boston, MA, 1988.

\bibitem{Bli:98}
S.~Blinnikov and R.~Moessner.
\newblock Expansions for nearly {G}aussian distributions.
\newblock {\em Astron. Astrophys. Suppl. Ser.}, 130(1):193--205, 1998.

\bibitem{Boy:84}
J.~P. Boyd.
\newblock Asymptotic coefficients of {H}ermite function series.
\newblock {\em J. Comput. Phys.}, 54(3):382--410, 1984.

\bibitem{Cia:99a}
A.~Cianchi.
\newblock Strong and weak type inequalities for some classical operators in
  {O}rlicz spaces.
\newblock {\em J. London Math. Soc. (2)}, 60(1):187--202, 1999.

\bibitem{Cia:14a}
A.~Cianchi.
\newblock Korn type inequalities in {O}rlicz spaces.
\newblock {\em J. Funct. Anal.}, 267(7):2313--2352, 2014.

\bibitem{Edm:20}
D.~E. Edmunds, Z.~Mihula, V.~Musil, and L.~Pick.
\newblock Boundedness of classical operators on rearrangement-invariant spaces.
\newblock {\em J. Funct. Anal.}, 278(4):108341, 2020.

\bibitem{Gal:88}
D.~Gallardo.
\newblock Orlicz spaces for which the {H}ardy-{L}ittlewood maximal operator is
  bounded.
\newblock {\em Publ. Mat.}, 32(2):261--266, 1988.

\bibitem{Ker:19}
R.~Kerman, R.~Rawat, and R.~K. Singh.
\newblock Dilation-commuting operators on power-weighted orlicz classes.
\newblock {\em Math. Inequal. Appl.}, (2):463--486, 2019.

\bibitem{Kit:97}
H.~Kita.
\newblock On {H}ardy-{L}ittlewood maximal functions in {O}rlicz spaces.
\newblock {\em Math. Nachr.}, 183:135--155, 1997.

\bibitem{Mor:50}
M.~Morse and W.~Transue.
\newblock Functionals {$F$} bilinear over the product {$A\times B$} of two
  pseudo-normed vector spaces. {II}. {A}dmissible spaces {$A$}.
\newblock {\em Ann. of Math. (2)}, 51:576--614, 1950.

\bibitem{Pic:13}
L.~Pick, A.~Kufner, O.~John, and S.~Fu{\v{c}}{\'{\i}}k.
\newblock {\em Function spaces. {V}ol. 1}, volume~14 of {\em De Gruyter Series
  in Nonlinear Analysis and Applications}.
\newblock Walter de Gruyter \& Co., Berlin, extended edition, 2013.

\bibitem{Rao:91}
M.~M. Rao and Z.~D. Ren.
\newblock {\em Theory of {O}rlicz spaces}, volume 146 of {\em Monographs and
  Textbooks in Pure and Applied Mathematics}.
\newblock Marcel Dekker, Inc., New York, 1991.

\bibitem{San:91}
G.~Sansone.
\newblock {\em Orthogonal functions}.
\newblock Dover Publications, Inc., New York, 1991.
\newblock Translated from the Italian by Ainsley H. Diamond, With a foreword by
  Einar Hille, Reprint of the 1959 edition.

\bibitem{Sil:86}
B.~W. Silverman.
\newblock {\em Density estimation for statistics and data analysis}.
\newblock Monographs on Statistics and Applied Probability. Chapman \& Hall,
  London, 1986.

\bibitem{Usp:26}
J.~V. Uspensky.
\newblock On the development of arbitrary functions in series of {H}ermite's
  and {L}aguerre's polynomials.
\newblock {\em Ann. of Math. (2)}, 28(1-4):593--619, 1926/27.

\end{thebibliography}
\end{document}